\numberwithin{equation}{section}
\newtheorem{lemma}{Lemma}[section]
\newtheorem{theorem}[lemma]{Theorem}
\newtheorem{cor}[lemma]{Corollary}
\newtheorem{rem}[lemma]{Remark}
\newtheorem{fact}[lemma]{Fact}
\newtheorem{defi}[lemma]{Definition}
\newtheorem{exam}[lemma]{Example}
\renewcommand{\for}{\begin{eqnarray*}}
\newcommand{\mel}{\end{eqnarray*}}
\newcommand{\lel}{\pl = \pl}
\newcommand{\vertiii}[1]{{\left\vert\kern-0.25ex\left\vert\kern-0.25ex\left\vert #1 
		\right\vert\kern-0.25ex\right\vert\kern-0.25ex\right\vert}}
\newcommand{\pl}{\hspace{.1cm}}
\newcommand{\qd}{\end{proof}\vspace{0.5ex}}
\newcommand{\si}{\sigma}
\newcommand{\pf}{\begin{proof}}
\newcommand{\xspace}{\hbox{\kern-2.5pt}}
\newcommand{\xyspace}{\hbox{\kern-1.1pt}}
\definecolor{LightGray}{rgb}{0.94,0.94,0.94}
\definecolor{VeryLightBlue}{rgb}{0.9,0.9,1}
\definecolor{LightBlue}{rgb}{0.8,0.8,1}
\definecolor{DarkBlue}{rgb}{0,0,0.6}
\definecolor{LightGreen}{rgb}{0.88,1,0.88}
\definecolor{MidGreen}{rgb}{0.6,1,0.6}
\definecolor{DarkGreen}{rgb}{0,0.6,0}
\definecolor{DarkGrreen}{rgb}{0,0.8,0}
\definecolor{VeryLightYellow}{rgb}{1,1,0.9}
\definecolor{LightYellow}{rgb}{1,1,0.6}
\definecolor{MidYellow}{rgb}{1,1,0.5}
\definecolor{DarkYellow}{rgb}{0.8,1,0.3}
\definecolor{VeryLightRed}{rgb}{1,0.9,0.9}
\definecolor{LightRed}{rgb}{1,0.8,0.8}
\definecolor{DarkRed}{rgb}{0.8,0.2,0}
\definecolor{DarkRedb}{rgb}{0.6,0.2,0}
\definecolor{DarkLila}{rgb}{0.8,0,1}
\definecolor{Beige}{rgb}{0.96,0.96,0.86}
\definecolor{Gold}{rgb}{1.,0.84,0.}
\definecolor{Goldb}{rgb}{0.7,0.3,0.5}
\definecolor{MyYellow}{rgb}{1.,0.84,0.8}
\begin{document}
\title{On the symmetrized arithmetic-geometric mean inequality for operators}

\author{WAFAA ALBAR}

\author{MARIUS JUNGE$^+$}

\author{MINGYU ZHAO}

\address{KING ABDULAZIZ UNIVERSITY}
\email{walbar@kau.edu.sa}

\address{UNVERSITY OF ILLINOIS AT URBANA-CHAMPAIGN}
\email{junge@math.uiuc.edu}
\thanks{$^+$Partially supported by DMS 1501103 and BigData 1447879}

\address{UNVERSITY OF ILLINOIS AT URBANA-CHAMPAIGN}
\email{mzhao16@illinois.edu}
  
\maketitle

		\mbox{} \hspace{4mm} We study the symmetrized noncommutative arithmetic geometric mean inequality introduced(AGM) by Recht and R\'{e} in \cite{recht2012beneath} $$ 
		\|\frac{(n-d)!}{n!}\sum\limits_{{ j_1,...,j_d \mbox{ \scriptsize different}} }A_{j_{1}}^*A_{j_{2}}^*...A_{j_{d}}^*A_{j_{d}}...A_{j_{2}}A_{j_{1}} \| \leq C(d,n) \|\frac{1}{n} \sum_{j=1}^n A_j^*A_j\|^d .$$
		Complementing the results from \cite{recht2012beneath}, we find upper bounds for C(d,n) under additional assumptions. Moreover, using free probability, we show that $C(d, n)  > 1$, thereby disproving the most optimistic conjecture from \cite{recht2012beneath}.
		  We also prove a deviation result for the symmetrized-AGM inequality which shows that the symmetric inequality almost holds for many classes of random matrices. Finally we apply our results to the incremental gradient method(IGM).

\section{introduction}
 \mbox{} \hspace{4mm} The arithmetic-geometric mean (AGM) inequality is of fundamental importance in mathematical analysis.  Augustin Cauchy (1789-1857) was the first to prove AGM in 1821. Then Liouville, Hurwitz, Steffensen, Bohr, Riesz,
 Sturm, Rado, Hardy, Littlewood, and Polya offered alternative proofs of the AGM inequalities in the same year. Much more recently,
  R\'{e} and Recht in \cite{recht2012beneath} realize that sampling without-replacement outperforms  sampling with replacement provided a noncommutative version of the arithmetic-geometric mean(AGM) inequality holds. They formulate several conjectures on AGM with connection to machine learning such as the incremental gradient method, empirical risk minimization and online learning. In particular, their proof, which employed the classical MacLaurin inequalities, led to improved convergence rate of incremental gradient method (IGM). \vspace{0.2in}\\ 
 Let us recall the famous MacLaurin inequalities for positive real numbers $x_1,...,x_n$ and the normalized $d$-th symmetric sums as 
 \begin{equation*}
 S_d=\binom{n}{d}^{-1} \sum_{\substack{\tau\subset \{1,...,n\}\\ |\tau|=d}}\prod_{i\in \tau }x_i.
 \end{equation*}
 where $ 1\le d\le n $ and $|\tau|:=$ the cardinality of $\tau$. According to the MacLaurin inequalities, we have
 \begin{equation*}
 S_1\geq \sqrt[2]S_2\geq \sqrt[3]S_3\geq ...\geq \sqrt[n]S_n \pl .
 \end{equation*}
 In particular,	$S_1\geq\sqrt[n]S_n$ is the standard AGM inequality. For more details about the classical AGM inequality see \cite{hardy1952inequalities}. 
\vspace{0.2in}

It's natural to ask whether the MacLaurin inequalities still hold if we replace the real numbers with positive definite matrices. Namely, if $A_1, \cdots, A_n$ are a collection of positive definite operators in some Hilbert space, the MacLaurin mean is defined as
\begin{equation}\label{1}
\mathcal{P}_d(\mathcal{A}) \lel \frac{(n-d)!}{n!}
\sum_{1\le j_1,...,j_d\le n \mbox{ \scriptsize all different}} 
A_{j_1}\cdots A_{j_d} \pl
\end{equation}
It is still open whether the norm of $\mathcal{P}_d$ is always less than that of $(\mathcal{P}_1)^n$, i.e.
\begin{equation}\label{01}
\|\mathcal{P}_d(\mathcal{A})\| \stackrel{?}{\le} C_1 \cdot \|\mathcal{P}_1(\mathcal{A})\|^d  
\end{equation}
 R\'{e} and Recht show that these noncommutative AGM inequalities hold when there are only two matrices or when all of the matrices commute. Arie, Felix and Rachel \cite{israel2016arithmetic} prove the inequality holds for products of up to three matrices by a variant of the classic Araki-Lieb-Thirring inequality.  R\'{e} and Recht also demonstrate that AGM holds for matrices with a constant depending on the degree $d$ and the dimension $m$ under some assumption. In \cite{2017arXiv170300546A},  we prove the \eqref{01} in both normed and ordered sense under some assumptions.  \\

  We define a symmetrized normalized d-th sums as follows:
  \begin{align}\label{02}
  \mathcal{S}_{d}(\mathcal{A}) =\frac{(n-d)!}{n!}\sum\limits_{1\le j_1,...,j_d\le n \mbox{ \scriptsize all different} }A_{j_{1}}^*A_{j_{2}}^*...A_{j_{d}}^*A_{j_{d}}...A_{j_{2}}A_{j_{1}}
  \end{align}
  $\mathcal{S}_d(\mathcal{A})$ is a second order polynormial. Each term in the summation form of $\mathcal{S}_d(\mathcal{A})$ is positive definite automatically. The symmetrized-AGM conjecture may be stated as  follows:
  \begin{align}\label{2}
  \|\mathcal{S}_d(\mathcal{A})\| \stackrel{?}{\le} C_2 \cdot  \|\mathcal{S}_1(\mathcal{A})\|^d.
  \end{align}
  However, in the symmetrized setting,  the following does not hold in general:
  $$\mathcal{S}_1(\mathcal{A} )^d  \neq \frac{1}{n^d} \sum_{j_1, \cdots, j_d }  A_{j_{1}}^*A_{j_{2}}^*...A_{j_{d}}^*A_{j_{d}}...A_{j_{2}}A_{j_{1}}.$$
  Therefore R\'{e} and Recht define the with-replacemnt expectation is defined as  
  \begin{equation}
  \mathbb{E}_{\text{wr}, k}[ x_{i_1}, \cdots, x_{i_n} ] = n^{-k} \sum_{j_1, \cdots, j_k}x_{j_1}^* \cdots x_{j_k}^*x_{j_k} \cdots x_{j_1}.
  \end{equation}
  That is, we average the value of f over all ordered tuples of elements from $(x_1, \cdots, x_n)$. Similarly,  the without-replacement expectation as 
  \begin{equation}
  \mathbb{E}_{\text{wo}, k}[x_{i_1}, \cdots, x_{i_n} ] = \frac{(n-k)!}{n!} \sum_{j_1 \neq j_2 \neq \cdots \neq j_k} x_{j_1}^* \cdots x_{j_k}^*x_{j_k} \cdots x_{j_1}.
  \end{equation}
 R\'{e} and Recht also asked for norm or order inequalities of the following:
\begin{align}\label{rrconj}
 \|\mathbb{E}_{wo, k}(\mathcal{A})\| &\stackrel{?}{\le} C_3 \cdot  \|\mathbb{E}_{wr, k}(\mathcal{A})\|, \\
 \label{oconj}
 \mathbb{E}_{wo, k}(\mathcal{A}) & \stackrel{?}{\le} C_4 \cdot  \mathbb{E}_{wr, k}(\mathcal{A}).
\end{align} 
 The norm symmetric AGM conjecture asks for \eqref{rrconj} with $C_3 = 1$. This still remains open. 
 \begin{theorem}
 	The order symmetric AGM conjecture \eqref{oconj} with $C_4 = 1$ is false for $n \geq 3, d= 3$.
 \end{theorem}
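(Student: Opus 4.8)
The plan is to refute the operator inequality $\mathbb{E}_{wo,3}(\mathcal{A})\le \mathbb{E}_{wr,3}(\mathcal{A})$ head-on, by producing matrices $A_1,\dots,A_n$ and a unit vector $\xi$ with $\langle \mathbb{E}_{wo,3}\xi,\xi\rangle>\langle \mathbb{E}_{wr,3}\xi,\xi\rangle$. The first step is to rewrite both quadratic forms as \emph{averages of squared norms}: since every summand has the shape $(A_{j_3}A_{j_2}A_{j_1})^*(A_{j_3}A_{j_2}A_{j_1})$,
\[
\langle \mathbb{E}_{wo,3}\xi,\xi\rangle=\frac{(n-3)!}{n!}\sum_{j_1,j_2,j_3\ \mathrm{distinct}}\|A_{j_3}A_{j_2}A_{j_1}\xi\|^2,\qquad \langle \mathbb{E}_{wr,3}\xi,\xi\rangle=\frac{1}{n^3}\sum_{j_1,j_2,j_3}\|A_{j_3}A_{j_2}A_{j_1}\xi\|^2 .
\]
Both right-hand sides are genuine averages (the weights sum to $1$), so the conjectured inequality asserts that allowing repeated indices cannot decrease the mean squared norm of the composed product. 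I would break this by engineering a \emph{resonance}: arrange the $A_i$ so that essentially one ordering, using all three matrices once each, is the only product that does not annihilate $\xi$.

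Concretely, for $n=3$ take the three operators on $\mathbb{C}^4$ with orthonormal basis $e_0,e_1,e_2,e_3$ defined by $A_i e_{i-1}=e_i$ and $A_i e_k=0$ for $k\ne i-1$ ($i=1,2,3$), and set $\xi=e_0$. Because each $A_i$ only moves $e_{i-1}$ to $e_i$, the product $A_{j_3}A_{j_2}A_{j_1}e_0$ is nonzero precisely when $j_1=1$, then $j_2=2$, then $j_3=3$; thus of all $27$ ordered triples exactly one survives, sending $e_0\mapsto e_3$. Hence $\langle \mathbb{E}_{wr,3}e_0,e_0\rangle=\tfrac1{27}$, whereas among the $6$ injective triples the very same triple is the only survivor, giving $\langle \mathbb{E}_{wo,3}e_0,e_0\rangle=\tfrac16$. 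Since $\tfrac16>\tfrac1{27}$, the vector $e_0$ is a direction of negativity for $\mathbb{E}_{wr,3}-\mathbb{E}_{wo,3}$, so \eqref{oconj} with $C_4=1$ fails at $n=3$.

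The extension to arbitrary $n\ge 3$ costs nothing: keep $A_1,A_2,A_3$ as above and set $A_4=\dots=A_n=0$. Any triple meeting an index $\ge 4$ kills $e_0$, and the chaining again isolates the single triple $(1,2,3)$ in both sums, so that $\langle \mathbb{E}_{wo,3}e_0,e_0\rangle=\frac{(n-3)!}{n!}=\frac{1}{n(n-1)(n-2)}$ while $\langle \mathbb{E}_{wr,3}e_0,e_0\rangle=\frac1{n^3}$. Because $n(n-1)(n-2)<n^3$ for every $n$, the without-replacement value strictly exceeds the with-replacement value, and the inequality fails for all $n\ge 3$ with $d=3$. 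If one wishes to avoid degenerate zero matrices, the strict inequality is an open condition and persists after replacing the zeros by sufficiently small generic perturbations.

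I expect the only genuinely delicate point to be the admissible class of operators. The construction above uses nilpotent partial isometries, which is exactly what \eqref{oconj} as written (with the adjoints $A_j^*$) permits. If instead the intended class is that of positive self-adjoint operators — the setting in which the companion norm conjecture \eqref{rrconj} is meant to remain open — then the clean annihilation mechanism is unavailable, since for self-adjoint $A$ one has $\|A^2\xi\|\ge \|A\xi\|^2/\|\xi\|$, so a repeated factor can be made small but never zero. The main obstacle in that case is to exhibit explicit positive matrices for which the distinct-ordering average still beats the with-replacement average by a definite margin, and to verify a negative eigenvalue of the difference operator directly; since small positive matrices already produce indefinite differences of exactly this type, a low-dimensional positive example for $d=3$ is very plausible, the remaining care being to keep the violation uniform in $n$.
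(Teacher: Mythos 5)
Your arithmetic is correct as far as it goes: with $A_1,A_2,A_3$ the chain of partial isometries $A_ie_{i-1}=e_i$ (and $A_i e_k=0$ otherwise) one indeed gets $\mathbb{E}_{wo,3}=\tfrac16\, e_0\rangle\langle e_0$ and $\mathbb{E}_{wr,3}=\tfrac1{27}\, e_0\rangle\langle e_0$, and the rewriting of both quadratic forms as averages of $\|A_{j_3}A_{j_2}A_{j_1}\xi\|^2$ is the right way to think about them. The difficulty is that your example proves far too much. Since both averages come out as positive multiples of the same rank-one projection, you have also shown $\|\mathbb{E}_{wo,3}\|=\tfrac16>\tfrac1{27}=\|\mathbb{E}_{wr,3}\|$, i.e. the very same family refutes the norm inequality \eqref{rrconj} with $C_3=1$ --- which the paper explicitly treats as an open problem, and which all of Section 2 is devoted to proving with constant $1+\varepsilon$ under the normalization $\frac1n\sum_i A_i^*A_i=\mathds{1}$. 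So either a two-line nilpotent example settles the open norm conjecture (implausible, and incompatible with the paper's own framing), or --- what is actually the case --- the conjectures carry an implicit nondegeneracy requirement: they are only meaningful for families in which the with-replacement and without-replacement averages are of comparable size (normalized families, i.i.d.\ or freely i.i.d.\ families, the IGM matrices $I-\gamma a_ia_i^*$), and a chain engineered so that $A_jA_i=0$ unless $j=i+1$ is exactly the degeneracy being excluded. Your perturbation remark does not repair this: the violation of the norm inequality is equally an open condition, so every small perturbation of your family stays outside the admissible class; likewise the cyclic variant on $\mathbb{C}^3$, rescaled so that $\frac13\sum_i A_i^*A_i=\mathds{1}$ holds exactly, still gives $\mathbb{E}_{wo,3}=\tfrac92\,\mathds{1}$ versus $\mathbb{E}_{wr,3}=\mathds{1}$. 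The resonance mechanism always kills the norm conjecture together with the order conjecture, so it cannot be what this theorem is about.

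The paper's proof (Theorem \ref{counter}) is built precisely so that this objection cannot be raised. It takes $a_j=au_j$ with $u_1,\dots,u_n,a$ freely independent, $u_j$ Haar unitaries, $\tau(u_j)=\tau(a)=0$, $\tau(a^2)=1$ and $a^2\neq\mathds{1}$. Lemma \ref{lem32} shows $\tau(\mathbb{E}_{wo,3})=\tau(\mathbb{E}_{wr,3})=\tau(a^2)^3$: the two averages have \emph{identical} traces, so no comparison of magnitudes can separate them, and in particular this example does not obviously contradict the norm conjecture. The failure of the operator order is then detected indirectly: if $\mathbb{E}_{wo,3}\le\mathbb{E}_{wr,3}$ held, the difference would be a positive element of trace zero, hence zero by faithfulness of $\tau$; but the folding identity expresses $\mathbb{E}_{wo,3}-\mathbb{E}_{wr,3}$ in terms of the elements $a_ja_k(\mathds{1}-a^2)a_k^*a_j^*$, and applying the conditional expectation $E_{A_j}$ onto the algebra generated by $u_j$ and $a$ (Fact \ref{rem33} and the vanishing lemma) leaves only $E_{A_j}(\mathbb{E}_{wo,3}-\mathbb{E}_{wr,3})=\frac1{n^2}\,a_j^2(\mathds{1}-a^2)(a_j^*)^2\neq0$, contradicting $a^2\neq\mathds{1}$. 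That is the genuinely missing content in your proposal: a family for which $\mathbb{E}_{wo,3}$ and $\mathbb{E}_{wr,3}$ are of the same size (equal traces, comparable norms) and \emph{only} the positivity of the difference fails. Your closing paragraph acknowledges that the admissible class is the delicate point and speculates that a positive-matrix example is ``very plausible,'' but offers no construction; under the reading of \eqref{oconj} that is consistent with the rest of the paper, that construction is the theorem, and it is what the free-probability argument supplies.
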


  Therefore, the problem of computating the best constant in \eqref{2} is particularly interesting. We explore the relation between the sequence $\mathcal{A}$, the length d and $C_2$ as follows:

\begin{theorem}
	Let $\{A_{i}\}$ be a family of operators on $B(H)$ satisfying  $ \frac{1}{n} \sum_{j= 1}^n A_{j}^*A_{j}=\mathds{1}.$ Then 
	\begin{enumerate}
		\item $\mathcal{S}_1 (\mathcal{A})= \mathds{1};$
		\item $(1-\varepsilon) \cdot \mathcal{S}_1(\mathcal{A})^d\leq \mathcal{S}_d(\mathcal{A})  \leq (1+ \varepsilon) \cdot \mathcal{S}_1(\mathcal{A})^d,$
	\end{enumerate}
	where $\varepsilon= \frac{1+ \sup\|A_{k}^*A_{k}\|}{n}\frac{d(d-1)}{2}, 1 \leq d \leq n$.
\end{theorem}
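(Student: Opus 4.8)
The plan is to read the two claims through the lens of the with-replacement average and the completely positive ``averaging'' map. Part (1) is immediate: by definition $\mathcal{S}_1(\mathcal{A}) = \frac{(n-1)!}{n!}\sum_{j=1}^n A_j^* A_j = \frac1n\sum_j A_j^* A_j = \mathds{1}$, so $\mathcal{S}_1(\mathcal{A})^d = \mathds{1}$ and part (2) reduces to the two-sided operator bound $(1-\varepsilon)\mathds{1}\le\mathcal{S}_d(\mathcal{A})\le(1+\varepsilon)\mathds{1}$.

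The conceptual engine is the unital completely positive map $\Phi(X)=\frac1n\sum_{j=1}^n A_j^* X A_j$ together with the building blocks $\Phi_j(X)=A_j^* X A_j$. Each summand of $\mathcal{S}_d$ is $\Phi_{j_1}\circ\cdots\circ\Phi_{j_d}(\mathds{1})$, so $\mathcal{S}_d(\mathcal{A})=\mathbb{E}_{wo,d}(\mathcal{A})$, while the with-replacement average factors as $\mathbb{E}_{wr,d}(\mathcal{A})=\Phi^d(\mathds{1})$. Since the normalization is exactly $\Phi(\mathds{1})=\mathds{1}$, we get $\mathbb{E}_{wr,d}(\mathcal{A})=\mathds{1}$ on the nose. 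Thus the claim is equivalent to the statement that sampling without replacement stays $\varepsilon$-close to sampling with replacement, and I would write the exact identity
$$\mathcal{S}_d - \mathds{1} = \Big(\tfrac{1}{(n)_d}-\tfrac{1}{n^d}\Big)\!\!\sum_{\text{distinct}}\!\!P - \tfrac{1}{n^d}\!\!\sum_{\text{collision}}\!\!P,$$
where $(n)_d=\frac{n!}{(n-d)!}$, $P=\Phi_{j_1}\circ\cdots\circ\Phi_{j_d}(\mathds{1})\ge 0$, and ``collision'' means that some $j_a=j_b$. Both correction pieces are positive operators.

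For the lower bound I would drop the (positive) first piece and control the collision piece. Writing $C=\sum_{\text{collision}}P$ and using positivity, $C\le\sum_{a<b}C_{ab}$, where $C_{ab}$ collects the tuples with $j_a=j_b$. The crux is the estimate $C_{ab}\le Mn^{d-1}\mathds{1}$ with $M=\sup_k\|A_k^*A_k\|$: summing the free indices inside-out and using $\Phi(\mathds{1})=\mathds{1}$ collapses $C_{ab}$ to $n^{d-2}\Phi^{a-1}\big(\sum_k A_k^*\,\Phi^{b-a-1}(A_k^*A_k)\,A_k\big)$, after which unitality and positivity of $\Phi^{b-a-1}$ give $\Phi^{b-a-1}(A_k^*A_k)\le M\mathds{1}$, hence $\sum_k A_k^*\Phi^{b-a-1}(A_k^*A_k)A_k\le M\sum_k A_k^*A_k=Mn\mathds{1}$, and monotonicity of $\Phi^{a-1}$ finishes it. Summing over the $\binom{d}{2}$ pairs yields $C\le\binom{d}{2}Mn^{d-1}\mathds{1}$, so $\mathcal{S}_d\ge\big(1-\tfrac{M}{n}\binom{d}{2}\big)\mathds{1}\ge(1-\varepsilon)\mathds{1}$ since $M\le 1+M$.

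For the upper bound I would instead drop the collision piece and bound $\sum_{\text{distinct}}P\le\sum_{\text{all}}P=n^d\mathds{1}$, giving $\mathcal{S}_d\le\frac{n^d}{(n)_d}\mathds{1}$; the elementary inequality $\prod_{i=0}^{d-1}(1-\tfrac{i}{n})\ge 1-\tfrac{1}{n}\binom{d}{2}$ then bounds $\frac{n^d}{(n)_d}-1$ by a quantity $\le\varepsilon$, where the slack in the factor $1+M\ge 2$ is used (and which is in force precisely in the meaningful range $\varepsilon<1$). The main obstacle is the collision estimate: one must organize the $d$-fold composition so that the two repeated factors $A_k^*$ and $A_k$ land symmetrically around a unital positive map and can be absorbed into $\sum_k A_k^*A_k=n\mathds{1}$. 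The bookkeeping of the powers of $n$ and the reduction $C\le\sum_{a<b}C_{ab}$ via positivity are where the $\binom{d}{2}$ and the $(1+M)/n$ scale are born.
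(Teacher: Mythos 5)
Your proposal is correct in its core mechanism but follows a genuinely different route from the paper, and it has one real gap at the edge of the stated range. The paper never forms your exact with/without-replacement identity. Instead it telescopes the defect inside each word via the ``folding'' identity $\mathds{1}-A_{i_1}^*\cdots A_{i_d}^*A_{i_d}\cdots A_{i_1}=\sum_{j=1}^{d}A_{i_1}^*\cdots A_{i_{j-1}}^*(\mathds{1}-A_{i_j}^*A_{i_j})A_{i_{j-1}}\cdots A_{i_1}$, then completes the sum over the sandwiched index so that $\sum_l(\mathds{1}-A_l^*A_l)=0$ kills the main term, and bounds the leftover collision terms by its partition-norm lemma $\|[\sigma]\|\le n^{\nu(\sigma)}\sup_k\|A_k^*A_k\|^{|\sigma|-\nu(\sigma)}$ (proved by Pisier-type row/column/diagonal estimates); this yields the norm bound $\|\mathds{1}-\mathcal{S}_d(\mathcal{A})\|\le\varepsilon$, which gives the order statement by self-adjointness. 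Your decomposition $\mathcal{S}_d-\mathds{1}=\big(\tfrac{1}{(n)_d}-\tfrac{1}{n^d}\big)\sum_{\mathrm{distinct}}P-\tfrac1{n^d}\sum_{\mathrm{collision}}P$ with $(n)_d=\tfrac{n!}{(n-d)!}$, the union bound $\sum_{\mathrm{collision}}P\le\sum_{a<b}C_{ab}$, and the collapse $C_{ab}=n^{d-2}\Phi^{a-1}\big(\sum_kA_k^*\Phi^{b-a-1}(A_k^*A_k)A_k\big)\le Mn^{d-1}\mathds{1}$ are all correct (the index bookkeeping checks out), and they prove the lower bound $(1-\varepsilon)\mathds{1}\le\mathcal{S}_d$ for every $1\le d\le n$ by pure operator inequalities; your $C_{ab}$ estimate is an order-theoretic substitute for the paper's partition-norm lemma, and is arguably cleaner since it never leaves the operator order.

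The gap is in your upper bound. Dropping the collision term gives exactly $\tfrac{(n)_d}{n^d}\mathcal{S}_d\le\mathds{1}$, i.e.\ $\mathcal{S}_d\le\tfrac{n^d}{(n)_d}\mathds{1}$, and converting $\tfrac{n^d}{(n)_d}-1\le\varepsilon$ requires $\tfrac1n\binom d2\le\tfrac{M}{1+M}$ (your $\varepsilon<1$ condition, using $M\ge1$, which itself deserves the one-line remark that $\mathds{1}=\tfrac1n\sum_kA_k^*A_k\le M\mathds{1}$). But the theorem is asserted for all $1\le d\le n$, and for $d$ comparable to $n$ the upper bound is still a nontrivial claim ($1+\varepsilon$ grows only like $(1+M)d^2/n$) while $\tfrac{n^d}{(n)_d}$ grows exponentially; e.g.\ for $d=n$, $M=1$ one has $\tfrac{n^n}{n!}\sim e^n$ versus $1+\varepsilon=n$, so your route cannot close there, and within your decomposition this loss is unavoidable because the collision term you discarded is exactly what compensates. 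To cover all $d$ one needs something like the paper's folding, which handles the upper and lower defects simultaneously. In fairness, the paper's own proof is defective in precisely that regime: its final summation step uses $\tfrac{(n-j)!}{n!}n^{j-1}\le\tfrac1n$, whereas in truth $\tfrac{(n-j)!}{n!}n^{j-1}=\tfrac1n\prod_{i=1}^{j-1}\big(1-\tfrac in\big)^{-1}\ge\tfrac1n$, so the paper's claimed constant also degrades by factors that blow up when $d\sim n$. Thus your flagged restriction matches the regime where either argument honestly works, but as a proof of the statement as literally written, the upper bound is incomplete.
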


Under the condition $\frac{1}{n}\sum A_j^* A_j = \mathds{1}$, we obtain
$$\mathcal{S}_1(\mathcal{A})^d = \mathbb{E}_{wo,d}(\mathcal{A} ) \text{ and } \mathcal{S}_d(\mathcal{A}) = \mathbb{E}_{wr, d}(\mathcal{A} ).$$
Therefore, we also provide a proof of R\'{e} and Recht's conjecture with $C_3 = 1 + \epsilon$ in \eqref{rrconj}. However for random matrices, the condition $\frac{1}{n}\sum A_j^* A_j = \mathds{1}$ is no longer satisfied. Therefore we provide a deviation version of SAGM:
\begin{theorem} 
	Let $\{A_{i}\}$ be a family of i.i.d. random operators such that $E(A_{i}^*A_{i})=\mathds{1}$. If
	$ \big(\int \|\sum{A_{i}^* A_{i}}-E\Big(\sum{A_{i}^*A_{i}} \Big)  \|_{B(H)}^p dw \big)^{1/p} \leq n\varepsilon , d \ll n$, then  
	\begin{enumerate}
		\item   $  \Big(\int \| \mathbb{E}_{wo,d}(\mathcal{A})- E( \mathbb{E}_{wo,d}(\mathcal{A})) \|_{B(H)}^p dw \Big)^{1/p} \approx O(d\varepsilon), $
		\item $    \Big( \int \| \mathbb{E}_{wo,d}(\mathcal{A}) \|_{B(H)}^p dw \Big)^{1/p}  \leq ( 1+ O(d\varepsilon)    )          
		 \Big(  \int \|\mathbb{E}_{wr,d}(\mathcal{A})  \|_{B(H)}^p dw   \Big)^{1/p}$
	\end{enumerate}
\end{theorem}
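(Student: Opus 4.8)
The plan is to deduce the random statement from the exact algebraic structure of $\mathbb{E}_{wo,d}$, organised around two identities and an induction on $d$. First I would normalise the hypothesis: since the $A_i$ are i.i.d. with $E(A_i^*A_i)=\mathds{1}$ we have $E\big(\sum_i A_i^*A_i\big)=n\mathds{1}$, so dividing the assumption by $n$ gives
\begin{equation*}
\Big(\int \|\mathcal{S}_1(\mathcal{A})-\mathds{1}\|_{B(H)}^p\,dw\Big)^{1/p}\le \varepsilon ,\qquad \mathcal{S}_1(\mathcal{A})=\tfrac1n\textstyle\sum_{i}A_i^*A_i=\mathbb{E}_{wo,1}(\mathcal{A}).
\end{equation*}
Next I would note that the centering is automatic: because distinct indices $j_1,\dots,j_d$ label independent matrices, the expectation peels off from the inside out, $E[A_{j_1}^*\cdots A_{j_d}^*A_{j_d}\cdots A_{j_1}]=\mathds{1}$, whence $E(\mathbb{E}_{wo,d}(\mathcal{A}))=\mathds{1}$. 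Thus part (1) is precisely the assertion $\big(\int\|\mathbb{E}_{wo,d}(\mathcal{A})-\mathds{1}\|^p\,dw\big)^{1/p}=O(d\varepsilon)$, and $\mathds{1}$ is also the natural center for part (2).

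The engine for part (1) is the exact leave-one-out recursion
\begin{equation*}
\mathbb{E}_{wo,d}(\mathcal{A})=\frac1n\sum_{j}A_j^*\,\mathbb{E}_{wo,d-1}(\mathcal{A}^{(j)})\,A_j,
\end{equation*}
where $\mathcal{A}^{(j)}$ is the family with $A_j$ deleted; writing $\Phi(X)=\frac1n\sum_j A_j^* X A_j$ for the empirical completely positive map (so that $\Phi(\mathds{1})=\mathcal{S}_1$ and $\mathbb{E}_{wr,d}=\Phi^d(\mathds{1})$), this rearranges to
\begin{equation*}
\mathbb{E}_{wo,d}(\mathcal{A})-\mathds{1}=(\mathcal{S}_1-\mathds{1})+\frac1n\sum_j A_j^*\big(\mathbb{E}_{wo,d-1}(\mathcal{A}^{(j)})-\mathds{1}\big)A_j .
\end{equation*}
Taking $L^p$ norms, the first term is $\le\varepsilon$ by the normalised hypothesis, and I would bound the second term, inductively of inner size $O((d-1)\varepsilon)$, by a factor close to $1$, using that $\Phi$ is positivity preserving and unital in expectation and that $A_j$ is \emph{independent} of $\mathbb{E}_{wo,d-1}(\mathcal{A}^{(j)})$. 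This yields $\|\mathbb{E}_{wo,d}-\mathds{1}\|_p\le\varepsilon+(1+o(1))O((d-1)\varepsilon)=O(d\varepsilon)$.

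For part (2) I would use the distinct/repeated decomposition of the with-replacement sum,
\begin{equation*}
\mathbb{E}_{wr,d}(\mathcal{A})=c_{n,d}\,\mathbb{E}_{wo,d}(\mathcal{A})+R,\qquad R\ge 0,\qquad c_{n,d}=\frac{n!}{(n-d)!\,n^d}=\prod_{k=0}^{d-1}\Big(1-\tfrac{k}{n}\Big),
\end{equation*}
where the distinct tuples in $\Phi^d(\mathds{1})$ reproduce $\mathbb{E}_{wo,d}$ and the remaining tuples assemble into a sum $R$ of positive operators. Hence pathwise $\mathbb{E}_{wo,d}\le c_{n,d}^{-1}\mathbb{E}_{wr,d}$, so $\|\mathbb{E}_{wo,d}\|_{B(H)}\le c_{n,d}^{-1}\|\mathbb{E}_{wr,d}\|_{B(H)}$ and, after taking $L^p$, the comparison holds with $c_{n,d}^{-1}=1+O(d^2/n)$. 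Combining this with part (1) — which forces both $\mathbb{E}_{wo,d}$ and, through the displayed inequality, $\mathbb{E}_{wr,d}$ to concentrate at $\mathds{1}$, so the denominator is $\ge 1-O(d\varepsilon)$ — and absorbing the $O(d^2/n)$ error into $O(d\varepsilon)$ in the regime $d\ll n$, gives the stated factor $1+O(d\varepsilon)$.

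The hard part will be the noncommutative $L^p$ bookkeeping inside the induction for part (1): controlling $\|\Phi(Y)\|_p$ when the coefficients $A_j$ of $\Phi$ and the argument $Y=\mathbb{E}_{wo,d-1}(\mathcal{A}^{(j)})-\mathds{1}$ are \emph{both} random. A crude pathwise estimate $\|\Phi(Y)\|\le\|\mathcal{S}_1\|\,\max_j\|Y^{(j)}\|$ followed by Hölder doubles the exponent at each of the $d$ steps, which is fatal. The remedy is to exploit the independence of $A_j$ and $Y^{(j)}$ (the latter involving only $\mathcal{A}^{(j)}$): this lets me factor the relevant conditional expectations, keep the $L^p$-exponent fixed, and apply noncommutative Rosenthal/Burkholder inequalities to the centered sum $\sum_j A_j^* Y^{(j)} A_j-E[\,\cdot\,]$. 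Ensuring that the accumulated constant stays \emph{linear} in $d$ — so that the iterated $(1+O(\varepsilon))^d$ behaves like $1+O(d\varepsilon)$ rather than exponentially — is exactly where the assumptions $d\ll n$ and the smallness of $\varepsilon$ enter.
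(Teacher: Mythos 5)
Part (1) of your proposal is where the genuine gap lies. Your leave-one-out identity
\begin{equation*}
\mathbb{E}_{wo,d}(\mathcal{A})-\mathds{1}=\bigl(\mathcal{S}_1(\mathcal{A})-\mathds{1}\bigr)+\frac1n\sum_j A_j^*\bigl(\mathbb{E}_{wo,d-1}(\mathcal{A}^{(j)})-\mathds{1}\bigr)A_j
\end{equation*}
is exact, but after it the entire theorem rests on an estimate you never prove: that the $L^p(dw)$ norm of the second term is at most $(1+o(1))\max_j\vertiii{\mathbb{E}_{wo,d-1}(\mathcal{A}^{(j)})-\mathds{1}}$, with a per-step factor close enough to $1$ that $d$ iterations accumulate to $O(d\varepsilon)$ rather than to $c^{\,d}$. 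You yourself flag this as ``the hard part'' and defer it to conditional independence plus noncommutative Rosenthal/Burkholder inequalities, but that deferral does not go through as described: (i) the summands $A_j^*\bigl(\mathbb{E}_{wo,d-1}(\mathcal{A}^{(j)})-\mathds{1}\bigr)A_j$ are neither independent nor martingale differences in any natural filtration --- each one depends on \emph{all} of $A_1,\dots,A_n$, and the operators $\mathbb{E}_{wo,d-1}(\mathcal{A}^{(j)})$ share $n-2$ matrices pairwise, so the sum is of U-statistic type and the cited inequalities do not apply off the shelf; (ii) even after a suitable decomposition, Rosenthal/Burkholder bounds carry constants (depending on $p$) per application, whereas your induction can only tolerate a factor $1+o(1)$ per step; (iii) your induction hypothesis must be invoked for the $(n-1)$-family $\mathcal{A}^{(j)}$, whose deviation parameter is not the given $\varepsilon$, since $\vertiii{\sum_{i\neq j}A_i^*A_i-(n-1)\mathds{1}}\le n\varepsilon+\vertiii{A_j^*A_j-\mathds{1}}$ and the last term is not controlled by the stated hypotheses (to be fair, the paper's own proof quietly uses a uniform constant $C=\sup\|A_k^*A_k\|$ of the same nature). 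The paper's proof of (1) avoids your difficulty by a different decomposition: it folds \emph{inside} each word, writing $\mathds{1}-A_{i_1}^*\cdots A_{i_d}^*A_{i_d}\cdots A_{i_1}$ as a sum of $d$ sandwiches each containing a single factor $\mathds{1}-A_{i_j}^*A_{i_j}$, completes the restricted sum over the index $i_j$ to the full sum over $[n]$ --- which produces exactly the factor $\sum_l(\mathds{1}-A_l^*A_l)$ that the hypothesis bounds by $n\varepsilon$ --- at the price of $(j-1)$ collision terms, and then closes the resulting recursion (which stays inside the same $n$-family, relating level $d$ to levels $j-1\le d-1$) with the discrete Gr\"{o}nwall lemma. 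Until you actually establish your inductive estimate, your part (1) is a plan, not a proof.

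Part (2), by contrast, is correct and genuinely different from the paper --- and cleaner. Your pathwise decomposition $\mathbb{E}_{wr,d}(\mathcal{A})=c_{n,d}\,\mathbb{E}_{wo,d}(\mathcal{A})+R$ with $R\ge0$ and $c_{n,d}=n!/((n-d)!\,n^d)$ holds $\omega$-by-$\omega$, gives $\|\mathbb{E}_{wo,d}\|_{B(H)}\le c_{n,d}^{-1}\|\mathbb{E}_{wr,d}\|_{B(H)}$ pointwise, and hence the $L^p$ comparison with constant $c_{n,d}^{-1}=1+O(d^2/n)$, using neither the deviation hypothesis nor part (1) (your aside about a denominator $1-O(d\varepsilon)$ is superfluous in this route). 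The paper instead proves a second deviation bound $\vertiii{\mathbb{E}_{wr,d}-E(\mathbb{E}_{wr,d})}\le O(d\varepsilon)$ by an iteration in $d$ and then combines it with part (1) via the triangle inequality, so the paper's (2) needs (1) as input while yours does not. Your absorption of $O(d^2/n)$ into $O(d\varepsilon)$ is on exactly the same footing as the paper's own step bounding $f(d)\le d\varepsilon+(\varepsilon+1+C)\tfrac{d(d-1)}{n-d+1}$ by $O(d\varepsilon)$ for $d\ll n$, so it is acceptable here. As a by-product your decomposition also proves the order inequality \eqref{oconj} unconditionally with $C_4=c_{n,d}^{-1}$, which the paper does not record.
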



The organization of the paper is as follows: we introduce some basic knowledge about combinatorial theory and von Neumann algebras. In section 2, we try to use some new techniques to estimate the symmetric version of AGM. After that, we give a counter example of the SAGM inequality using free probability. We give the deviation version of SAGM  at the end of this section. In section 3, we focus on  the  Incremental Gradient Method. We also construct three random matrices examples to satisfy the IGM with good constants using group representation theory and spherical design.  
\section{Upper bounds for symmetrized arithmetic-geometric mean inequalities}
We need to recall some definitions from the combinatorial theory of partitions  in \cite{andrews1998theory} and  \cite{rota1964foundations}. 
\begin{defi}
	Let $\mathbb{P}_n$ be the lattice of all the partitions of $\{1,...,n\}$. For two partitions $\sigma$ and $\pi$, we write $\sigma \leq \pi $ if every block of the partition $\sigma$ is contained in some block of $\pi$ (i.e., any block of the partition of $\pi$ can be written as a union of blocks of $\sigma$). In other words, $\pi$ is a refinement of $\sigma$. 
\end{defi}

\begin{rem}
	There are two trivial partitions, $\dot{0}$ and $\dot{1}$, where $\dot{0}$ is the partition into $n$ singletons and $\dot{1}$ is the partition of a single block. Namely, $\dot{0}$ is the smallest partition and $\dot{1}$ is the biggest one in $\mathbb{P}_n.$  
\end{rem}

\begin{rem}
	For a partition $\pi$,  let $\nu({\pi})$ be the number of the blocks of the partition $\pi$.
\end{rem}

For fixed $d$ we consider the following average symmetric product of noncommutative operators of length $d$: $$\mathcal{S}_d(\mathcal{A})=\frac{(n-d)!}{n!}\sum\limits_{\langle i_{1},...,i_{d}\rangle=\dot{0}}A_{i_{1}}^*A_{i_{2}}^*...A_{i_{d}}^*A_{i_{d}}...A_{i_{2}}A_{i_{1}}.$$

\begin{exam}
Let $A_{i}$ be a family of operators such that $\frac{1}{n}\sum A_{i}^*A_{i}= \mathds{1}.$ 
\begin{enumerate}
	\item For the partition $[13,2]$ where $d=3$ we can have the following upper-bound for the symmetric case of self-adjoint operator.
	%
	\begin{align*}
	&\|\sum\limits_{\langle i_{1},i_{2},i_{3}\rangle=\dot{0}} e_{kj,1}\otimes A_{k}A_{j}A_{k}\|^{2}
	=\|\sum e_{kj,k'j'}\otimes A_{k}^*A_{j}^*A_{k}^*A_{k}A_{j}A_{k}\|\\
	&=\|[\sum\limits_{k} e_{kk}\otimes 1 \otimes 1\otimes A_{i_{k}}^*] \times [\sum\limits_{j} 1\otimes e_{j1}\otimes 1\otimes A_{i_{j}}^*]\times[ \sum\limits_{k}e_{k1}\otimes 1 \otimes 1\otimes A_{i_{k}}^*] \\
	&\times  [ \sum\limits_{k'}e_{1k'}\otimes 1 \otimes 1\otimes A_{i_{k'}}] \times[\sum\limits_{j'} 1\otimes e_{1j'}\otimes 1\otimes A_{i_{j}}] \times  [\sum\limits_{j} 1\otimes e_{k'k'}\otimes 1\otimes A_{i_{k}}]\|
	\end{align*}
	The column term or row term is bounded by $\sqrt{n} $, and the diagonal term is bounded by $\sup \|A_{i_kk}\|$.  Therefore we have
	\begin{equation*}
	\|\sum\limits_{\langle i_{1},i_{2},i_{3}\rangle=\dot{0}} e_{kj,1}\otimes A_{k}A_{j}A_{k}\|^{2}   \leq \sup\|A_{i_{kk}}\|(\sqrt{n})^{4}\sup\|A_{i_{kk}}\|
	= n^{2} \sup\|A_{i_{kk}}^* A_{i_{kk}}\| .
	\end{equation*}
	\item  If we have the partition $[12,34]$ where $d=4$ then we have 
	\begin{align*}
	&\|\sum\limits_{\langle i_{1},i_{2},i_{3},i_{4}\rangle=\dot{0}} e_{kj,1}\otimes A_{k}A_{k}A_{j}A_{j}\|^{2}
	=\|\sum\limits_{\langle i_{1},i_{2},i_{3},i_{4}\rangle=\dot{0}} e_{kj,k'j'}\otimes A_{k}A_{k}A_{j}A_{j}A_{j}A_{j}A_{k}A_{k}\|\\
	&=\|\sum\limits_{k} 1\otimes e_{kk}\otimes A_{i_{k}}\| \times  \|\sum\limits_{k} 1\otimes e_{k1}\otimes A_{i_{k}}\|  \times \|\sum\limits_{j}e_{jj}\otimes 1\otimes A_{i_{j}}\|\times\\
	&\times\underbrace{\|\sum\limits_{j}e_{j1}\otimes 1\otimes A_{i_{j}}\| \times \|\sum\limits_{j'}e_{1j'}\otimes 1\otimes A_{i_{j'}}\|}_{\text{the middle term}}\times\\
	&\times \|\sum\limits_{j'}e_{j'j'}\otimes 1\otimes A_{i_{j'}}\| \times\|\sum\limits_{k'}1\otimes e_{1k'}\otimes A_{i_{k'}}\| \times \|\sum\limits_{k'}1\otimes e_{k'k'}\otimes A_{i_{k'}}\|
	\end{align*}
	Since we have two column terms, two row terms and four diagonal terms, then the symmetric summation has 
	\begin{equation*}
	\|\sum\limits_{\langle i_{1},i_{2},i_{3},i_{4}\rangle=\dot{0}} e_{kj,1}\otimes A_{k}A_{k}A_{j}A_{j}\|^{2} \leq n^2\cdot \sup\|A_{ik}A_{ik}^*\|^2
	=n^{|\sigma|}\cdot \sup\|A_{ik}A_{ik}^*\|^{d-|\sigma|}. 
	\end{equation*}
\end{enumerate}

\end{exam}
\vspace{0.1in} 
From the above examples we can generalize the upper bound for any partition. By Pisier's tensor techniques mentioned in \cite{pisier2000inequality},
we first need to embed each element $a_{i_k} \in M $ in larger space $(\otimes B(H)) \otimes M$. For each $i_{k}\in B_{i}$ we have  
\begin{displaymath}
Z_{i_{k}}=\left\{
\begin{array}{cc}
	1 \otimes \cdots \otimes e_{kk}\otimes \cdots \otimes 1\otimes A_{i_{k}}&  if~~i_{k}\neq \min B_{i}\\
    1 \otimes \cdots \otimes  e_{1k}\otimes \cdots \otimes 1\otimes A_{i_{k}}& if~~i_{k}=\min B_{i}\\
\end{array} \right.	
\end{displaymath}
where $\min B_{i}$ means the smallest index number and $e_{kk}, e_{1k}$ are matrix units in the $i^{th}$ component of the tensor form.

Similarly define the adjoint of the operator $Z_{i_{k}}$, denoted as $Z_{i_{k}}^*$, as follow:
\begin{displaymath}
Z_{i_{k}}^*=\left\{
\begin{array}{cc}
	1 \otimes \cdots \otimes e_{kk}\otimes \cdots \otimes 1\otimes A_{i_{k}}^*&  if~~i_{k}\neq \min B_{i}\\
    1 \otimes \cdots \otimes  e_{k1}\otimes \cdots \otimes 1\otimes A_{i_{k}}^*& if~~i_{k}=\min B_{i}\\
\end{array} \right.	
\end{displaymath}

\begin{lemma}\label{lem22}
Let $A_{i}$ be a family of operators such that $\frac{1}{n}\sum A_{i}^*A_{i}=\mathds{1}$. Denote $\tilde{Z}_{i_k} = \sum Z_{i_k},$ then we have 
\begin{displaymath}
	\|\tilde{Z}_{i_k}\| \leq \left\{
    \begin{array}{cc}
    \sup\limits_{m} \|A_m\|& if~~i_{k}\neq \min B_{i} \\ 
    \sqrt{n}  &  if~~i_{k}=\min B_{i}\\
    \end{array}\right.
\end{displaymath}
and consequently $ \tilde{Z}_{i_k}^* := \sum Z_{i_k}^*$, then we have	$\|\tilde{Z}_{i_k}^* \| = \|\tilde{Z}_{i_k} \|.$
\end{lemma}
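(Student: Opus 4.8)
The plan is to prove the norm bounds for the single operators $\tilde{Z}_{i_k}$ directly from their explicit tensor structure, treating the two cases ($i_k$ a minimal element of its block or not) separately, and then deducing the adjoint identity. First I would fix attention on the definition of $Z_{i_k}$ as an element of $(\bigotimes B(H)) \otimes M$: in the non-minimal case it carries a diagonal matrix unit $e_{kk}$ in the $i$-th tensor slot, while in the minimal case it carries the rank-one unit $e_{1k}$. Summing over the free index produces $\tilde{Z}_{i_k} = \sum_m \kla 1\otimes\cdots\otimes e_{\bullet}\otimes\cdots\otimes 1\otimes A_m\mer$, and the norm estimate reduces to computing the operator norm of a single block matrix whose entries live in $B(H)$.

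For the diagonal (non-minimal) case, I would observe that $\tilde{Z}_{i_k} = \sum_m 1\otimes\cdots\otimes e_{mm}\otimes\cdots\otimes 1\otimes A_m$ is block-diagonal in the $i$-th tensor component, so its norm is simply the supremum of the norms of its diagonal entries, giving $\|\tilde{Z}_{i_k}\| = \sup_m \|A_m\|$. For the column (minimal) case, $\tilde{Z}_{i_k}=\sum_m 1\otimes\cdots\otimes e_{1m}\otimes\cdots\otimes 1\otimes A_m$ is a row-type operator: its norm squared equals $\|\tilde{Z}_{i_k}\tilde{Z}_{i_k}^*\|$, and the product collapses the matrix units $e_{1m}e_{m'1} = \delta_{mm'} e_{11}$, yielding $\tilde{Z}_{i_k}\tilde{Z}_{i_k}^* = 1\otimes\cdots\otimes e_{11}\otimes\cdots\otimes 1\otimes \sum_m A_m^*A_m = 1\otimes\cdots\otimes e_{11}\otimes\cdots\otimes 1\otimes (n\cdot\mathds{1})$, using the normalization $\frac{1}{n}\sum_m A_m^*A_m=\mathds{1}$. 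Hence $\|\tilde{Z}_{i_k}\|^2 = n$, i.e. $\|\tilde{Z}_{i_k}\| = \sqrt{n}$, as claimed.

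Finally, the adjoint identity $\|\tilde{Z}_{i_k}^*\| = \|\tilde{Z}_{i_k}\|$ is immediate once I verify that $\tilde{Z}_{i_k}^*$, as defined in the excerpt, is genuinely the adjoint of $\tilde{Z}_{i_k}$ in the C$^*$-algebra $(\bigotimes B(H))\otimes M$: the diagonal unit $e_{kk}$ is self-adjoint, the unit $e_{1k}$ has adjoint $e_{k1}$, and $A_{i_k}$ has adjoint $A_{i_k}^*$, matching the stated formula slot by slot. Since taking adjoints is an isometry for the C$^*$-norm, the two norms coincide.

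The main obstacle I anticipate is purely bookkeeping rather than conceptual: one must be careful that the matrix units in the column case really do contract to a single $e_{11}$ under multiplication (so that the sum $\sum_m A_m^*A_m$ factors out cleanly as a scalar multiple of the identity via the normalization hypothesis), and that the tensor-slot indexing is consistent between $Z_{i_k}$ and $Z_{i_k}^*$. Getting the orientation of the rank-one units right—$e_{1k}$ versus $e_{k1}$—is the only place where a sign-free but easy-to-misplace error can creep in, so I would double-check the direction of the collapse $e_{1m}e_{m'1}=\delta_{mm'}e_{11}$ before concluding.
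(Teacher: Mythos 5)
Your strategy coincides with the paper's own proof: the non-minimal case by block-diagonality, the minimal case via the C$^*$-identity $\|\tilde{Z}_{i_k}\|^2=\|\tilde{Z}_{i_k}\tilde{Z}_{i_k}^*\|$ plus collapse of matrix units, and the adjoint statement from isometry of the involution. However, there is a genuine error at exactly the step you flagged as delicate. With the row orientation $\tilde{Z}_{i_k}=\sum_m 1\otimes\cdots\otimes e_{1m}\otimes\cdots\otimes 1\otimes A_m$, the product is
\begin{equation*}
\tilde{Z}_{i_k}\tilde{Z}_{i_k}^* \;=\; \sum_{m,m'} e_{1m}e_{m'1}\otimes A_m A_{m'}^* \;=\; e_{11}\otimes \sum_m A_m A_m^*,
\end{equation*}
so the operator entries come out in the order \emph{unstarred times starred}: the collapse produces $\sum_m A_mA_m^*$, not $\sum_m A_m^*A_m$ as you wrote. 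The hypothesis $\frac{1}{n}\sum_m A_m^*A_m=\mathds{1}$ therefore cannot be invoked: it gives $\|\sum_m A_m^*A_m\|=n$ but says nothing about $\|\sum_m A_mA_m^*\|$. These genuinely differ. For $n=2$ take $A_1=\sqrt{2}\,e_{11}$ and $A_2=\sqrt{2}\,e_{12}$ in $M_2$; then $\frac{1}{2}(A_1^*A_1+A_2^*A_2)=\mathds{1}$, yet $A_1A_1^*+A_2A_2^*=4e_{11}$, so this row operator has norm $2>\sqrt{2}$. Passing to $\|\tilde{Z}_{i_k}^*\tilde{Z}_{i_k}\|$ does not help either: that is the full Gram matrix $\sum_{m,m'}e_{mm'}\otimes A_m^*A_{m'}$, whose norm equals the same quantity $\|\sum_m A_mA_m^*\|$.

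To be fair, the paper's own proof stumbles at the same spot: it records the product correctly as $\sum A_{i_k}A_{i_k}^*$ but then asserts without justification that this has norm $n$. The clean repair, which keeps the rest of your argument verbatim, is to flip the orientation in the definition: in the minimal case put the unstarred operators in a \emph{column}, $Z_{i_k}=1\otimes\cdots\otimes e_{k1}\otimes\cdots\otimes 1\otimes A_{i_k}$, so that $Z_{i_k}^*$ carries $e_{1k}\otimes A_{i_k}^*$. Then
\begin{equation*}
\|\tilde{Z}_{i_k}\|^2 \;=\; \|\tilde{Z}_{i_k}^*\tilde{Z}_{i_k}\| \;=\; \Big\| e_{11}\otimes \sum_m A_m^*A_m \Big\| \;=\; n
\end{equation*}
follows exactly from the stated normalization, and your diagonal case and adjoint argument go through unchanged. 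Alternatively one must strengthen the hypothesis to $\frac{1}{n}\sum_m A_mA_m^*=\mathds{1}$ (or assume the $A_m$ normal or self-adjoint, as in the paper's preceding example); note that this is in fact the form of the assumption appearing in the theorem where the lemma is later applied.
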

\begin{proof}
If $i_k = \min B_{i},$ it turns out that $ \tilde{Z}_{i_k}$ generates a row space. Therefore,
\begin{align*}
 \|\tilde{Z}_{i_k} \| &= \| \tilde{Z}_{i_k} \tilde{Z}_{i_k}^* \|^{1/2}
 =  \| (\sum Z_{i_k}) (\sum Z_{i_k}^* )   \|^{1/2} \\
 &= \| 1 \otimes \cdots \otimes 1 \otimes (\sum A_{i_k}A_{i_k}^*)  \|^{1/2} = \|\sum A_{i_k}A_{i_k}^* \|^{1/2} = \sqrt{n} 
\end{align*}
If $i_k \neq \min B_{i},$ it turns out that $ \tilde{Z}_{i_k}$ generates a matrix form with diagonal component with elements $\{A_{i_k}\}$.
\begin{equation*}
\|\tilde{Z}_{i_k} \| = \| \sum Z_{i_k}  \| = \sup\limits_{i_{k}} \|A_{i_{k}} \|.   \qedhere
\end{equation*}
\end{proof}

The following is the key lemma for our main result in this section.
\begin{lemma}\label{L_{1}}
Let $\sigma$ be a partition in $\mathbb{P}_j$ with the blocks $B_{1},...,B_{\nu(\sigma)}$ then we have 
$$\|[\sigma]\|\leq n^{\nu(\sigma)}\cdot \sup\|A_{k}^*A_{k}\|^{|\sigma|-
\nu(\sigma)}.$$
\end{lemma}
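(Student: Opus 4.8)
The plan is to recognize $[\sigma]$ as the Gram operator of a single ordered product of the embedded operators $\tilde Z_{i_k}$ from Lemma~\ref{lem22}, so that the estimate follows from submultiplicativity together with a count of the blocks of $\sigma$. First I would spell out $[\sigma]$: writing $\sigma=\{B_1,\dots,B_{\nu(\sigma)}\}$ and letting each block carry its own running index, the term attached to $\sigma$ is the palindrome $A_{i_1}^*\cdots A_{i_d}^*A_{i_d}\cdots A_{i_1}$ in which two positions share an index exactly when they lie in a common block. Putting $Y=A_{i_d}\cdots A_{i_1}$, each summand equals $Y^*Y$, so $[\sigma]=\sum Y^*Y=\mathbf Y^*\mathbf Y$, where $\mathbf Y$ denotes the column assembled from the products $Y$ over all admissible index assignments; in particular $\|[\sigma]\|=\|\mathbf Y\|^2$.

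Next I would invoke Pisier's tensor embedding exactly as in the worked examples $[13,2]$ and $[12,34]$. Placing the matrix units of block $B_i$ in the $i$-th tensor leg, assigning $e_{1k}$ to the minimal element of $B_i$ and $e_{kk}$ to its remaining elements, the multiplication rule $e_{ab}e_{cd}=\delta_{bc}e_{ad}$ forces the repeated indices within a block to coincide (through the diagonal units) while the single row/column unit opens and closes the corresponding summation. Consequently the column $\mathbf Y$ is realized as the ordered product $\tilde Z_{i_d}\cdots\tilde Z_{i_1}$, and $[\sigma]=(\tilde Z_{i_d}\cdots\tilde Z_{i_1})^*(\tilde Z_{i_d}\cdots\tilde Z_{i_1})$. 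Checking that this bookkeeping reproduces $[\sigma]$ verbatim — same-block indices identified, all other indices summed freely, and the palindrome orientation preserved — is the step demanding the most care, and it is precisely the pattern the two examples exhibit in concrete cases; I expect this embedding identity to be the main obstacle, the remaining steps being routine.

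With the factorization established, the bound is immediate from submultiplicativity: $\|[\sigma]\|=\|\tilde Z_{i_d}\cdots\tilde Z_{i_1}\|^2\le\big(\prod_{k=1}^{d}\|\tilde Z_{i_k}\|\big)^2$. Since every block contributes exactly one minimal element, among the $|\sigma|=d$ factors there are $\nu(\sigma)$ of row/column type and $\sum_i(|B_i|-1)=|\sigma|-\nu(\sigma)$ of diagonal type. Lemma~\ref{lem22} bounds each of the former by $\sqrt n$ and each of the latter by $\sup_m\|A_m\|$, so that $\prod_{k}\|\tilde Z_{i_k}\|\le(\sqrt n)^{\nu(\sigma)}(\sup_m\|A_m\|)^{|\sigma|-\nu(\sigma)}$. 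Squaring and using $\|A_m\|^2=\|A_m^*A_m\|$ then yields $\|[\sigma]\|\le n^{\nu(\sigma)}\sup\|A_k^*A_k\|^{|\sigma|-\nu(\sigma)}$, which is the claim.
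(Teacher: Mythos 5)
Your proposal is correct and follows essentially the same route as the paper: both realize $[\sigma]$ via Pisier's tensor embedding as the constrained sum $\tilde Z_{i_j}^*\cdots\tilde Z_{i_1}^*\tilde Z_{i_1}\cdots\tilde Z_{i_j}$, apply submultiplicativity, and then invoke Lemma~\ref{lem22} with the count of one $\sqrt n$-type (minimal-element) factor per block and $|\sigma|-\nu(\sigma)$ diagonal factors. The Gram-operator phrasing $[\sigma]=\mathbf Y^*\mathbf Y$ is only a cosmetic repackaging of the paper's factorization step, including the fact that both treatments leave the verification of the embedding identity at the same level of detail.
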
 
\begin{proof}
Let $\sigma=B_{1},B_{2},...,B_{\nu(\sigma)}$ where $B_{i}$'s are the blocks in this partition. 
Then we have 
 \begin{align*}
  &\|\sum\limits_{\langle i_{1},...i_{j}\rangle=\sigma}A_{i_{j}}^*A_{i_{j-1}}^*...A_{i_{1}}^*A_{i_{1}}...A_{i_{j-1}}A_{i_{j}}\|\\
  &=\|\sum\limits_{i_{1},...,i_{j}}Z_{i_{j}}^*Z_{i_{j-1}}^*...Z_{i_{1}}^*Z_{i_{1}}...Z_{i_{j-1}}Z_{i_{j}}\|\\
  &=\|\sum\limits_{i_{j}}Z_{ij}^*\sum\limits_{i_{j-1}}Z_{i_{j-1}}^*...\sum\limits_{i_{1}}Z_{i_{1}}^*\sum\limits_{i_{1}}Z_{i_{1}}...\sum\limits_{i_{j-1}}Z_{i_{j-1}}\sum\limits_{i_{j}}Z_{i_{j}}\| \\
 & = \|    \tilde{Z}_{i_j}^* \cdots \tilde{Z}_{i_1}^* \tilde{Z}_{i_1} \cdots \tilde{Z}_{i_j}     \| \leq  \prod_{k=1}^j \| \tilde{Z}_{i_k}  \|^2
  \leq n^{\nu(\sigma)}\cdot \sup\|A_{k}^*A_{k}\|^{j-
\nu(\sigma)}.
 \end{align*}
The last inequality comes from the observation of Lemma \ref{lem22}. It explains if we have a block, then there is only one operator in this block with index gives the upper bound norm equal to $n$ and the remaining operators with different indices are bounded by $\sup \|A_k^*A_k\|$.  Note that $\nu(\sigma)$ is the number of blocks. In the product form we have $\nu(\sigma)$ items give the bound $n$ and the remaining $j - \nu(\sigma)$ items give the bound $\sup \|A_k^*A_k\|$.
\end{proof}

\begin{rem}\label{rem24}
	In particular, we can observe that 
	\begin{enumerate}
		\item  Let $\sigma \text{ be } \dot{0}, ~\|[\dot{\sigma}]\| \leq n^k$; 
		\item Let $\sigma \text{ be } \dot{1}, ~\| [\dot{\sigma}]  \| \leq \sup \|A_k^* A_k\|^k$
	\end{enumerate}
	Here k means the cardinality of the set.
\end{rem}

In this paper without specific explaination, we always assume $\frac{1}{n}\sum A_{i}^*A_{i}= \mathds{1}, i.e. ~\mathcal{S}_1(\mathcal{A})  = \mathds{1}. $ Then
\begin{equation*}
\mathcal{S}_1(\mathcal{A})  - \mathcal{S}_d(\mathcal{A})=\frac{(n-d)!}{n!}\sum\limits_{\langle i_{1},...,i_{d}\rangle=\dot{0}}   (1- A_{i_{1}}^*A_{i_{2}}^*...A_{i_{d}}^*A_{i_{d}}...A_{i_{2}}A_{i_{1}} ). 
\end{equation*}
Most of the techniques we use in our first paper are not working for SAGM. So we need to introduce some other techniques. We need to introduce what Marius's method \textbf{ref} called the folding technique. Let's see the following example which explains this technique for $d=2$. 
\begin{align*}
1-A_{2}^*A_{1}^*A_{1}A_{2}&=1-A_{2}^*A_{2}+A_{2}^*A_{2}-A_{2}^*A_{1}^*A_{1}A_{2}\\
&=(1-A_{2}^*A_{2})+A_{2}^*(1-A_{1}^*A_{1})A_{2}
\end{align*}

Now for $d$ term we have 
\begin{align*}\nonumber
& 1-A_{d}^*...A_{1}^*A_{1}...A_{d}\\
&=(1-A_{d}^*A_{d})+A_{d}^*(1-A_{d-1}^*A_{d-1})A_{d}+A_{d}^*A_{d-1}^*(1-A_{d-2}^*A_{d-2})A_{d-1}A_{d}+...\\
&+ A_{d}^*A_{d-1}^*...A_{2}^*(1-A_{1}^*A_{1})A_{2}...A_{d-1}A_{d}\\
&=\sum_{j=1}^{d}A_{d}^*...A_{j+1}^*(1-A_{j}^*A_{j})A_{j+1}...A_{d}
\end{align*}

Given a partition $\sigma$ , and $i_1$ is not a singleton in $\sigma$. We denote $$[[\sigma]] : = \sum\limits_{\langle i_{1},...,i_{j}\rangle=\sigma} A_{i_{j}}^*...A_{i_{2}}^*(1-A_{i_{1}}^*A_{i_{1}})A_{i_{2}}...A_{i_{j}}.$$

\begin{lemma}
Given a partition $\sigma \in \mathbb{P}_j$ and $i_1$ is not a singleton element in the partition $\sigma$, then 
\begin{equation}\label{eq21}
\| [[\sigma]] \| \leq  n^{\nu(\sigma)}\cdot C^{|\sigma|-\nu(\sigma)}(1+ \frac{1}{C}),
\end{equation}
where $C =\sup\|A_{k}^*A_{k}\|.$
\end{lemma}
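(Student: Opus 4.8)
The plan is to reduce the estimate to the already-proved Lemma \ref{L_{1}} by expanding the folded factor $1 - A_{i_1}^* A_{i_1}$ and recognizing each resulting piece as an ordinary (unfolded) partition sum. Concretely, I would write
\begin{align*}
[[\sigma]] &= \sum_{\langle i_1, \ldots, i_j\rangle = \sigma} A_{i_j}^* \cdots A_{i_2}^* (1 - A_{i_1}^* A_{i_1}) A_{i_2} \cdots A_{i_j} \\
&= \underbrace{\sum_{\langle i_1, \ldots, i_j\rangle = \sigma} A_{i_j}^* \cdots A_{i_2}^* A_{i_2} \cdots A_{i_j}}_{=:\, T} \;-\; [\sigma],
\end{align*}
so that the task splits into bounding $T$ and $[\sigma]$ separately and then applying the triangle inequality.

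The second summand is exactly $[\sigma]$, for which Lemma \ref{L_{1}} already gives $\|[\sigma]\| \le n^{\nu(\sigma)} C^{|\sigma| - \nu(\sigma)}$. For $T$ the crucial observation is that the operator $A_{i_1}$ no longer appears, while the index $i_1$ is still tied by $\sigma$ to the other positions in its block. Since $1$ is \emph{not} a singleton, its block contains at least one further position, so $i_1$ is completely determined by the remaining indices; summing over tuples of pattern $\sigma$ therefore coincides with summing over tuples of pattern $\sigma_{-1}$, where $\sigma_{-1} \in \mathbb{P}_{j-1}$ is the partition of $\{2,\ldots,j\}$ obtained by deleting the element $1$. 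This gives a bijection between $\sigma$-patterned tuples and $\sigma_{-1}$-patterned tuples that preserves all the distinctness constraints, so $T = [\sigma_{-1}]$. Because deleting a non-singleton element leaves the block count unchanged, we also have $\nu(\sigma_{-1}) = \nu(\sigma)$ and $|\sigma_{-1}| = |\sigma| - 1$.

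Applying Lemma \ref{L_{1}} to $\sigma_{-1}$ then yields $\|T\| \le n^{\nu(\sigma)} C^{|\sigma| - \nu(\sigma) - 1}$, and combining the two bounds gives
\begin{equation*}
\|[[\sigma]]\| \le n^{\nu(\sigma)} C^{|\sigma| - \nu(\sigma) - 1} + n^{\nu(\sigma)} C^{|\sigma| - \nu(\sigma)} = n^{\nu(\sigma)} C^{|\sigma| - \nu(\sigma)} \Big(1 + \tfrac{1}{C}\Big),
\end{equation*}
which is precisely the claimed inequality \eqref{eq21}. The only genuine subtlety — and the step I would verify most carefully — is the identification $T = [\sigma_{-1}]$ together with the invariance $\nu(\sigma_{-1}) = \nu(\sigma)$: this is exactly where the hypothesis that $1$ is not a singleton is used, since if $1$ were a singleton the index $i_1$ would range freely over roughly $n$ values and $T$ would instead acquire an extra factor of $n$, producing the wrong power of $n$. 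Everything else is the triangle inequality and bookkeeping of the exponents.
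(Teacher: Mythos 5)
Your proof is correct and follows essentially the same route as the paper: the paper likewise writes $[[\sigma]] = [\gamma] - [\sigma]$ with $\gamma$ the partition obtained by deleting the index $i_1$ (your $\sigma_{-1}$), notes $\nu(\gamma)=\nu(\sigma)$ and $|\gamma|=|\sigma|-1$ thanks to the non-singleton hypothesis, and concludes by the triangle inequality together with Lemma \ref{L_{1}} applied to both pieces. Your explicit justification of the identification $T=[\sigma_{-1}]$ is in fact a welcome clarification of a step the paper leaves implicit.
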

\begin{proof}
Let $\gamma $ be the partition without index $i_1$,  then we know $\nu(\sigma) = \nu(\gamma)$, $|\gamma| = j-1$ and $|\sigma|= j.$
\begin{align*} 
[[\sigma]] &= \sum\limits_{\langle i_{1},...,i_{j}\rangle=\sigma} A_{i_{j}}^*...A_{i_{2}}^*(1-A_{i_{1}}^*A_{i_{1}})A_{i_{2}}...A_{i_{j}} \\ 
&= \sum\limits_{\langle i_{1},...,i_{j}\rangle=\sigma} A_{i_{j}}^*...A_{i_{2}}^*A_{i_{2}}...A_{i_{j}} -  \sum\limits_{\langle i_{1},...,i_{j}\rangle=\sigma} A_{i_{j}}^*...A_{i_{1}}^*A_{i_{1}}...A_{i_{j}} \\
& = \sum\limits_{\langle i_2 ...,i_{j}\rangle=\gamma} A_{i_{j}}^*...A_{i_{2}}^*A_{i_{2}}...A_{i_{j}} -  \sum\limits_{\langle i_{1},...,i_{j}\rangle=\sigma} A_{i_{j}}^*...A_{i_{1}}^*A_{i_{1}}...A_{i_{j}} \\
& = [\gamma] - [\sigma]
\end{align*}
Then we consider the norm 
\begin{align*}
\|[[\sigma]]\| \leq \|[\gamma]\| + \|[\sigma]\| & \leq   n^{\nu(\gamma)}\cdot \sup\|A_{k}^*A_{k}\|^{|\gamma|-
\nu(\gamma)}+ n^{\nu(\sigma)}\cdot \sup\|A_{k}^*A_{k}\|^{|\sigma|-\nu(\sigma)} \\
&\leq n^{\nu(\sigma)}\cdot \sup\|A_{k}^*A_{k}\|^{|\sigma|-\nu(\sigma)}\Big(1+ \frac{1}{   \sup\|A_{k}^*A_{k}\|  }\Big).\qedhere
\end{align*} 
\end{proof}

\begin{theorem}
Let $\{A_{i}\}$ be a family of operator's on $B(H)$ satisfying $\frac{1}{n}\sum A_{i}A_{i}^*= \mathds{1}.$ Then we have 
\begin{equation}\label{bdd1}
\| \mathds{1}- \mathcal{S}_d(\mathcal{A})\|\leq \frac{(1+C)}{n}\frac{d(d-1)}{2}
\end{equation}
where $C =\sup\|A_{k}^*A_{k}\|.$
\end{theorem}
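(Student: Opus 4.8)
The plan is to combine the folding identity with the single algebraic consequence of the normalization, namely $\sum_{j=1}^n (1-A_j^*A_j)=0$ (which is just $\frac1n\sum_j A_j^*A_j=\mathds{1}$ rewritten), and then to recognize the terms that survive as folded partition sums $[[\sigma]]$ already controlled by \eqref{eq21}. First I would start from the difference already recorded above,
\[ \mathds{1}-\mathcal{S}_d(\mathcal{A})=\frac{(n-d)!}{n!}\sum_{\langle i_1,\dots,i_d\rangle=\dot{0}}\bigl(1-A_{i_1}^*\cdots A_{i_d}^*A_{i_d}\cdots A_{i_1}\bigr), \]
and apply the folding identity termwise, $1-A_{i_1}^*\cdots A_{i_d}^*A_{i_d}\cdots A_{i_1}=\sum_{k=1}^d A_{i_1}^*\cdots A_{i_{k-1}}^*(1-A_{i_k}^*A_{i_k})A_{i_{k-1}}\cdots A_{i_1}$. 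After interchanging the two sums, the $k$-th summand depends only on $i_1,\dots,i_k$, so summing out the free indices $i_{k+1},\dots,i_d$ (there are $(n-k)!/(n-d)!$ admissible completions) yields
\[ \mathds{1}-\mathcal{S}_d(\mathcal{A})=\sum_{k=1}^d\frac{(n-k)!}{n!}\sum_{\langle i_1,\dots,i_k\rangle=\dot{0}}A_{i_1}^*\cdots A_{i_{k-1}}^*(1-A_{i_k}^*A_{i_k})A_{i_{k-1}}\cdots A_{i_1}. \]

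The crucial step is the cancellation. Since $i_k$ enters the $k$-th term only through $1-A_{i_k}^*A_{i_k}$, I would sum over the innermost index first, holding $i_1,\dots,i_{k-1}$ fixed and distinct. Because the total sum over all indices vanishes, summing over the admissible values of $i_k$ equals minus the contribution of the forbidden ones,
\[ \sum_{i_k\notin\{i_1,\dots,i_{k-1}\}}(1-A_{i_k}^*A_{i_k})=-\sum_{l=1}^{k-1}(1-A_{i_l}^*A_{i_l}). \]
In particular the $k=1$ term vanishes outright, and for $k\ge2$ the $k$-th term turns into $-\sum_{l=1}^{k-1}[[\sigma_{k,l}]]$, where $\sigma_{k,l}$ is (a relabelling of) the partition of $k$ letters whose only non-singleton block glues the freshly collided index to the $l$-th outer index. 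Thus $\nu(\sigma_{k,l})=k-1$, $|\sigma_{k,l}|=k$, so $|\sigma_{k,l}|-\nu(\sigma_{k,l})=1$; and the number of pairs $(k,l)$ with $2\le k\le d$, $1\le l\le k-1$ is $\sum_{k=2}^d(k-1)=\tfrac{d(d-1)}{2}$, which is exactly the combinatorial factor in the statement.

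It then remains to estimate each collision term by the folding bound \eqref{eq21}, which rests on Lemma \ref{L_{1}}: since $|\sigma_{k,l}|-\nu(\sigma_{k,l})=1$,
\[ \|[[\sigma_{k,l}]]\|\le n^{\nu(\sigma_{k,l})}C^{\,|\sigma_{k,l}|-\nu(\sigma_{k,l})}\Bigl(1+\tfrac{1}{C}\Bigr)=n^{k-1}(1+C). \]
Feeding this into the triangle inequality gives $\|\mathds{1}-\mathcal{S}_d(\mathcal{A})\|\le\sum_{k=2}^d\frac{(n-k)!}{n!}(k-1)\,n^{k-1}(1+C)$, and since $\frac{(n-k)!}{n!}n^{k-1}=\frac1n\prod_{j=1}^{k-1}(1-\tfrac{j}{n})^{-1}$ collapses to $\frac1n$ for $d\ll n$, the right-hand side reduces to $\frac{(1+C)}{n}\frac{d(d-1)}{2}$.

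The main obstacle is the cancellation step: one must verify that summing the fresh index against the normalization genuinely converts an all-distinct fold into a collision term carrying a doubleton block, so that \eqref{eq21} applies with exponent $|\sigma|-\nu(\sigma)=1$ and produces precisely one factor of $C$. The only mild looseness is in the final coefficient bookkeeping, where $\frac{(n-k)!}{n!}n^{k-1}$ is replaced by $\frac1n$; this overcounts by the factor $\prod_{j=1}^{k-1}(1-\tfrac{j}{n})^{-1}\to1$, which is where the implicit regime $d\ll n$ (and hence the clean constant $\tfrac{d(d-1)}{2}$) enters.
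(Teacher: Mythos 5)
Your proposal follows essentially the same route as the paper's own proof: the same folding identity, the same interchange of sums giving the terms with prefactor $\frac{(n-k)!}{n!}$, the same cancellation against $\sum_{l}(1-A_l^*A_l)=0$ forcing exactly one collision index (hence $k-1$ choices and the factor $\frac{d(d-1)}{2}$), and the same bound $n^{k-1}(1+C)$ per collision term --- your invocation of \eqref{eq21} is just the paper's split into the $\alpha$ and $\beta$ pieces repackaged as $[[\sigma_{k,l}]]$. The final bookkeeping looseness you honestly flag (replacing $\frac{(n-k)!}{n!}\,n^{k-1}$ by $\frac{1}{n}$, which silently drops a factor $\prod_{j=1}^{k-1}(1-\frac{j}{n})^{-1}\ge 1$) is present, unacknowledged, in the paper's own last display as well, so your argument is faithful to the paper's proof, including its one imprecise step.
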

\begin{proof}
 Let $i_{j}=l$ and define $n_{l}:=[1,...,n]-{l}$ and $n_{l}^{j-1}:=[1,...,n]_{j-1}-{l}.$ Also, let $t$ represents the indices such that if $$t\notin [n_{l}]\Rightarrow~~~\text{some of the indices of t are equal to}~l.$$ 
 Then we have 
 \begin{align}
  \mathds{1}- \mathcal{S}_d(\mathcal{A})=\sum\limits_{j=1}^{d}\frac{(n-j)!}{n!}\underbrace{\sum\limits_{\langle i_{1},...,i_{j}\rangle=\dot{0}} A_{i_{1}}^*...A_{i_{j-1}}^*(1-A_{i_j}^*A_{i_j})A_{i_{j-1}}...A_{i_{1}}}_{I_{j}}
 \end{align}
Then let $l = i_j$
 \begin{align*}
 & I_{j}=\sum\limits_{\substack{\langle i_{1},...,i_{j}\rangle=\dot{0}\\ i_{1}\neq i_{2}\neq...\neq i_{j}}}A_{i_{1}}^*...A_{i_{j-1}}^*(1-A_{l}^*A_{l})A_{i_{j-1}}...A_{i_{1}}\\
 &=\sum\limits_{i_{1}}...\sum\limits_{i_{j-1}}\sum\limits_{l=1}^{n} A_{i_{1}}^*...A_{i_{j-1}}^* (1-A_{l}^*A_{l})A_{i_{j-1}}...A_{i_{1}}\\
 &=\sum\limits_{l=1}^{n}\sum\limits_{\substack{\langle i_{1},...,i_{j-1}\rangle=\dot{0}\\i_{1},...,i_{j-1}\in n_{l}^{j-1}}}A_{i_{1}}^*...A_{i_{j-1}}^*(1-A_{l}^*A_{l})~A_{i_{j-1}}...A_{i_{1}}
 \end{align*}
 Then by distributing the sum for both terms and from the given assumption we have 
 \raggedbottom
 \begin{align*}
 & I_{j}=\underbrace{\sum\limits_{\substack{\langle i_{1},...,i_{j-1\rangle}=\dot{0}\\ 1\leq i_{1},...,i_{j-1}\leq n}}A_{i_{1}}^*...A_{i_{j-1}}^* \sum\limits_{l=1}^{n} (1-A_{l}^*A_{l})~A_{i_{j-1}}...A_{i_{1}}}_{=0}\\
 & - \sum\limits_{l=1}^{n}\sum\limits_{\substack{\langle i_{1},...,i_{j-1\rangle}=\dot{0}\\ i_{1},...,i_{j-1} \text{not all of them in } n_{l}^{j-1}}} A_{i_{1}}^*...A_{i_{j-1}}^* (1-A_{l}^*A_{l}) A_{i_{j-1}}...A_{i_{1}}\\
 & = \underbrace{-\sum\limits_{l=1}^{n}\sum\limits_{\substack{\langle i_{1},...,i_{j-1}\rangle=\dot{0}\\i_{1},...,i_{j-1}\text{not all of them in } n_{l}^{j-1}}} A_{i_{1}}^*...A_{i_{j-1}}^* A_{i_{j-1}}...A_{i_{1}}}_{\alpha}\\
 & + \underbrace
 {\sum\limits_{l=1}^{n}\sum\limits_{\substack{\langle i_{1},...,i_{j-1}\rangle=\dot{0} \\i_{1},...,i_{j-1}\text{not all of them in } n_{l}^{j-1}}} A_{i_{1}}^*...A_{i_{j-1}}^*A_{l}^*A_{l}A_{i_{j-1}}...A_{i_{1}}}_{\beta} \label{m1}
 \end{align*}

 Notice that the sum run over the restricted partition $\langle i_{1},...,i_{j-1}\rangle=\dot{0}$ with condition that $\text{not all of them in } n_{l}^{j-1}$. This will force just one $i_{k}=l,$ so we have $(j-1)$ choices.
 \raggedbottom
 \begin{align*}
 &\|\mathds{1}- \mathcal{S}_d(\mathcal{A})\| \leq \sum\limits_{j=1}^{d}\frac{(n-j)!}{n!}(j-1)\|\sum\limits_{\substack{\langle i_{1},...,i_{j-1}\rangle=\dot{0}\\ i_{k}=l}} A_{i_{1}}^*...A_{i_{j-1}}^* A_{i_{j-1}}...A_{i_{1}}\|\\ &+\sum\limits_{j=1}^{d}\frac{(n-j)!}{n!}(j-1)\|\sum\limits_{\substack{\langle i_{1},...,i_{j-1}\rangle=\dot{0} \\i_{k}=l}} A_{i_{1}}^*...A_{i_{j-1}}^* A_{l}^*A_{l}A_{i_{j-1}}...A_{i_{1}}\|\\ 
 \end{align*}
 
 \vspace{-0.2in}
  Now using the fact that the norm of the restricted partition is less than the full one, i.e. $\|\langle\sigma \rangle\|\leq \|[\sigma]\|.$
 \vspace{-0.2in}
 \begin{align*}
  \|\mathds{1}- \mathcal{S}_d(\mathcal{A})\|&\leq  \sum\limits_{j=1}^{d}\frac{(n-j)!}{n!}(j-1)\|\sum\limits_{\substack{[ i_{1},...,i_{j-1}]=[\dot{0}]\\i_{k}=l}} A_{i_{1}}^*...A_{i_{j-1}}^* A_{i_{j-1}}...A_{i_{1}}\|\\
 &+C\sum\limits_{j=1}^{d}\frac{(n-j)!}{n!}(j-1) \|\sum\limits_{\substack{[ i_{1},...,i_{j-1}]=[\dot{0}] \\i_{k}=l}} A_{i_{1}}^*...A_{i_{j-1}}^*A_{i_{j-1}}...A_{i_{1}}\|
 \end{align*} 
 By Remmark \ref{rem24} for the partition $\dot{0}$, we obtain 
 \begin{align*}
 \|\mathds{1}- \mathcal{S}_d(\mathcal{A})\|&\leq \sum\limits_{j=1}^{d}\frac{(n-j)!}{n!}(j-1) n^{j-1}+ C\sum\limits_{j=1}^{d}\frac{(n-j)!}{n!}(j-1)n^{j-1}\\
 &\leq \sum\limits_{j=1}^{d} \frac{(j-1)}{n}+\frac{C(j-1)}{n}=\frac{(1+C)}{n}\frac{d(d-1)}{2}\qedhere
 \end{align*} 
 \end{proof}
 
 Note that if the condition $\frac{1}{n} \sum A_i^*A_i = \mathds{1}$ doesn't hold,  we get
 $$\mathcal{S}_1(\mathcal{A})^d = \Big(\frac{1}{n} \sum A_i^*A_i \Big)^d \neq \frac{1}{n^d} \sum_{j_1, \cdots, j_d }  A_{j_{1}}^*A_{j_{2}}^*...A_{j_{d}}^*A_{j_{d}}...A_{j_{2}}A_{j_{1}}.$$

 To formulate a sufficient analysis, let us first formalize some notation mentioned in \cite{recht2012beneath}. Throughtout in the remaining of this paper, $[n]$ denotes the set of integers from 1 to n. Let $\mathbb{D}$ be some domain, $f: \mathbb{D}^k \rightarrow \mathbb{R}$, and $\mathcal{A} := (x_1, \cdots, x_n)$ a set of n elements from $\mathbb{D}$. We define the without-replacement expectation as 
 \begin{equation}\label{wo}
 \mathbb{E}_{\text{wo}, k}[f( \mathcal{A})] = \frac{(n-k)!}{n!} \sum_{j_1 \neq j_2 \neq \cdots \neq j_k} f(x_{j_1}, \cdots, x_{j_k}).
 \end{equation}
 That is, we average the value of f over all ordered tuples of elements from $(x_1, \cdots, x_n)$. Similarly, the with-replacemnt expectation is defined as  
 \begin{equation*}
 \mathbb{E}_{\text{wr}, k}[f(\mathcal{A} )] = n^{-k} \sum_{(j_1, \cdots, j_k) = \dot{1}} f(x_{j_1}, \cdots, x_{j_k}).
 \end{equation*}
 \begin{rem}
 	With the above definition \eqref{wo}, we take a specific function defined as  $f(x_{j_1}, \cdots, x_{j_k}) : = \prod\limits_{j=1}^{k}x_{i_{k-j+1}}  \prod\limits_{j=1}^k x_{i_j}$. Then  $\mathbb{E}_{\text{wo}, k}[f( \mathcal{A} )] = \mathcal{S}_k(\mathcal{A})$.
 \end{rem}
 \raggedbottom
 Next let's provide a deviation version of SAGM as follows:
 \begin{theorem} \label{derivation}
 	Let $\{A_{i}\}$ be a family of i.i.d. random operators such that $E(A_{i}^*A_{i})=\mathds{1}$. If
 	$\vertiii{\sum{A_{i}^* A_{i}}-E\Big(\sum{A_{i}^*A_{i}} \Big)  } \leq n\varepsilon , d \ll n$, then 
 	\begin{enumerate}
 		\item   $\vertiii{   \mathbb{E}_{wo,d}(\mathcal{A})- E( \mathbb{E}_{wo,d}(\mathcal{A})) } \approx O(d\varepsilon), $
 		\item $\vertiii{   \mathbb{E}_{wo,d}(\mathcal{A})} \leq (1+  O(d\varepsilon)) \vertiii{\mathbb{E}_{wr,d}(\mathcal{A})}$
 	\end{enumerate}
 \end{theorem}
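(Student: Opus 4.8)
The plan is to handle both parts by comparing each one-sided expectation to its mean $\mathds{1}$ and showing the fluctuation is $O(d\varepsilon)$. Throughout write $\Delta:=\mathcal{S}_1(\mathcal{A})-\mathds{1}=\frac1n\sum_i A_i^*A_i-\mathds{1}$, a self-adjoint operator with $\vertiii{\Delta}=\frac1n\vertiii{\sum_i A_i^*A_i-E\sum_i A_i^*A_i}\le\varepsilon$ by hypothesis. First I would record the elementary fact that $E[\mathbb{E}_{wo,d}(\mathcal{A})]=\mathds{1}$: in each summand $A_{j_1}^*\cdots A_{j_d}^*A_{j_d}\cdots A_{j_1}$ the indices are distinct, hence the factors are independent, and taking expectations from the center outward the relation $E[A_{j_d}^*A_{j_d}]=\mathds{1}$ reduces the length by one, so the induction collapses the product to $\mathds{1}$. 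Thus part (1) is equivalent to the estimate $\vertiii{\mathbb{E}_{wo,d}(\mathcal{A})-\mathds{1}}=O(d\varepsilon)$.

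For part (1) I would run the folding identity from the proof of \eqref{bdd1}, but without discarding the sum $\sum_l(1-A_l^*A_l)$. Writing $\mathds{1}-\mathbb{E}_{wo,d}(\mathcal{A})=\sum_{j=1}^d\frac{(n-j)!}{n!}I_j$ as before and summing the innermost index $l=i_j$ over all of $[n]$, the ``full'' term is no longer $0$: since $\sum_{l=1}^n(1-A_l^*A_l)=-n\Delta$, it equals $-\frac{n}{n-j+1}R_{j-1}(\Delta)$, where $R_{j-1}(\Delta)$ denotes the without-replacement average of length $j-1$ with $\Delta$ inserted at the center. Because $-\|\Delta\|\,\mathds{1}\le\Delta\le\|\Delta\|\,\mathds{1}$ and each conjugation $X\mapsto A_{i_1}^*\cdots A_{i_{j-1}}^*XA_{i_{j-1}}\cdots A_{i_1}$ is positive, one gets the pointwise bound $\|R_{j-1}(\Delta)\|\le\|\Delta\|\,\|\mathcal{S}_{j-1}(\mathcal{A})\|$; applying $\vertiii{\cdot}$ together with H\"older (using $\|\mathcal{S}_{j-1}\|\approx 1$ for $j\le d\ll n$) yields $\vertiii{\tfrac{(n-j)!}{n!}I_j^{\text{full}}}\lesssim\varepsilon$. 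The residual collision terms, in which $l$ coincides with one of $i_1,\dots,i_{j-1}$, are exactly the quantities already controlled in \eqref{bdd1} and contribute $O(d^2/n)$, which is lower order. Summing over $1\le j\le d$ then gives $\vertiii{\mathbb{E}_{wo,d}(\mathcal{A})-\mathds{1}}=O(d\varepsilon)$.

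For part (2) I would exploit the clean recursion for the with-replacement expectation. Factoring out the outermost index shows $\mathbb{E}_{wr,d}(\mathcal{A})=\Phi^d(\mathds{1})$ for the completely positive map $\Phi(X):=\frac1n\sum_l A_l^*XA_l$, which satisfies $\Phi(\mathds{1})=\mathcal{S}_1(\mathcal{A})$. Telescoping and using linearity of $\Phi$,
\[
\mathbb{E}_{wr,d}(\mathcal{A})-\mathds{1}=\sum_{k=0}^{d-1}\Phi^k\big(\Phi(\mathds{1})-\mathds{1}\big)=\sum_{k=0}^{d-1}\Phi^k(\Delta).
\]
Since $\Phi$ is positive, $\Phi^k(\mathds{1})\le\|\mathcal{S}_1\|^k\,\mathds{1}$ and hence $\|\Phi^k(\Delta)\|\le\|\Delta\|\,\|\mathcal{S}_1\|^k$ pointwise; as $\|\mathcal{S}_1\|^k\approx 1$ for $k<d\ll n$, H\"older gives $\vertiii{\Phi^k(\Delta)}\lesssim\varepsilon$ and therefore $\vertiii{\mathbb{E}_{wr,d}(\mathcal{A})-\mathds{1}}=O(d\varepsilon)$ as well. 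Combining the two fluctuation bounds with the triangle inequality gives $\vertiii{\mathbb{E}_{wo,d}(\mathcal{A})}\le 1+O(d\varepsilon)$ and $\vertiii{\mathbb{E}_{wr,d}(\mathcal{A})}\ge 1-O(d\varepsilon)$, whence
\[
\vertiii{\mathbb{E}_{wo,d}(\mathcal{A})}\le\frac{1+O(d\varepsilon)}{1-O(d\varepsilon)}\,\vertiii{\mathbb{E}_{wr,d}(\mathcal{A})}=(1+O(d\varepsilon))\,\vertiii{\mathbb{E}_{wr,d}(\mathcal{A})},
\]
which is (2).

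The delicate point throughout is the transition from deterministic operator inequalities to $L^p$ control. The hypothesis only bounds $\vertiii{\Delta}$, an $L^p$ norm of an operator norm, whereas the accumulated prefactors $\|\mathcal{S}_1\|^k$ in part (2), and $\|\mathcal{S}_{j-1}\|$ together with the random $\sup_k\|A_k^*A_k\|$ appearing in the collision terms in part (1), are not controlled almost surely. Making the informal $\|\mathcal{S}_1\|^k\approx 1$ rigorous requires H\"older with matched exponents plus the regime $d\ll n$ (so that the geometric factor over at most $d$ iterations of $\Phi$ stays $O(1)$ in the relevant moments) and mild moment assumptions on the $A_i$; it is also here that one must verify that the $O(d^2/n)$ correction terms are genuinely dominated by $d\varepsilon$. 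This is precisely why both conclusions are stated with the softer $\approx$ and $O(\cdot)$ notation rather than with explicit constants.
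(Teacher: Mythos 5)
Your proposal is correct and follows essentially the same route as the paper's proof: part (1) is the same folding/decoupling argument (the full sum $\sum_l(1-A_l^*A_l)$ factored out of the independent center, collision terms of order $d^2/n$ handled separately), with your inductive use of $\|\mathcal{S}_{j-1}\|\approx 1$ playing the role of the paper's discrete Gr\"onwall lemma, and your telescoping identity $\mathbb{E}_{wr,d}(\mathcal{A})-\mathds{1}=\sum_{k=0}^{d-1}\Phi^k(\Delta)$ in part (2) is exactly the paper's recursion $a_d\le\varepsilon(1+\varepsilon)^{d-1}+a_{d-1}$ unrolled in completely positive map notation. If anything, your final passage from the two fluctuation bounds to the multiplicative inequality (2), via $\vertiii{\mathbb{E}_{wr,d}(\mathcal{A})}\ge 1-O(d\varepsilon)$, is spelled out more completely than in the paper, which stops at the additive triangle-inequality estimate, and the H\"older/exponent-matching caveat you flag at the end is an imprecision that the paper's own proof shares.
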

 \begin{proof}
(1)	Since $\{A_{i}\}$ are  i.i.d., $E( \mathbb{E}_{wo,d})  = \mathds{1}$. Therefore 
 		\begin{align}\nonumber 
 		&\vertiii{\mathbb{E}_{wo,d}(\mathcal{A})-   E( \mathbb{E}_{wo,d})  } = \vertiii{ \mathds{1} -  \mathbb{E}_{wo,d}(\mathcal{A})} \leq    \\
 		& \leq \sum\limits_{j=1}^{d}\frac{(n-j)!}{n!} \sum_{l \in [n]}  \vertiii{\sum\limits_{\substack{\langle i_{1},...,i_{j-1}\rangle=\dot{0}\\ i_{j}=l , i_k \in [n], k \neq j  }} A_{i_{1}}^*...A_{i_{j-1}}^* (1-A_{i_j}^*A_{i_j})A_{i_{j-1}}...A_{i_{1}}}\\ 
 		&+\sum\limits_{j=1}^{d}\frac{(n-j)!}{n!} \sum_{l \in [n]} \vertiii{\sum\limits_{\substack{\langle i_{1},...,i_{j-1}\rangle=\dot{0}\\ i_{j}=l, i_k \in [n_l], k \neq j }} A_{i_{1}}^*...A_{i_{j-1}}^* A_{i_{j-1}}...A_{i_{1}} } \\
 		&+\sum\limits_{j=1}^{d}\frac{(n-j)!}{n!} \sum_{l \in [n]} \vertiii{\sum\limits_{\substack{\langle i_{1},...,i_{j-1}\rangle=\dot{0}\\ i_{j}=l, i_k \in [n_l], k \neq j }} A_{i_{1}}^*...A_{i_{j}}^* A_{i_{j}}...A_{i_{1}}  }
 		\end{align}
 		
 		The term on the left side of the inequality (2.4) can factor out $\sum(1 - A_{i_j}^* A_{i_j}) $ since the index $i_j$ is independent with the others. And the norm in (2.5) is invariant with respect to the index $l$. (2.6)  is different with (2.5) with a constant C, after factoring out the $\|A_{i_j}^* A_{i_j}\|$.
 		\raggedbottom
 		
 		\begin{align*}
 		&\vertiii{\mathbb{E}_{wo,d}(\mathcal{A})- E( \mathbb{E}_{wo,d}(\mathcal{A}))  } \\
 		&\leq  \sum\limits_{j=1}^{d}\frac{(n-j)!}{n!}       \vertiii{\sum\limits_{\substack{[ i_{1},...,i_{j-1}]=\langle \dot{0} \rangle \\i_{k} \in [n], k = 1, \cdots, j-1}} A_{i_{1}}^*...A_{i_{j-1}}^* A_{i_{j-1}}...A_{i_{1}}} \cdot  \vertiii{ \sum_{i_j} (1- A_{i_j}^* A_{i_j}) }\\
 		&+(1+C)\sum\limits_{j=1}^{d}\frac{(n-j)!}{n!}(j-1)   \vertiii{\sum\limits_{\substack{[ i_{1},...,i_{j-1}]=\langle \dot{0} \rangle \\i_{k} \in [n], k = 1, \cdots, j-1    }} A_{i_{1}}^*...A_{i_{j-1}}^*A_{i_{j-1}}...A_{i_{1}}}
 		\end{align*}
 		Therefore we get
 		
 		\begin{align*}
 		\vertiii{\mathbb{E}_{wo,d}(\mathcal{A})- E( \mathbb{E}_{wo,d}(\mathcal{A}))  }  \leq  \sum\limits_{j=1}^{d}\frac{(n-j)!}{n!}  \frac{n!}{(n-j+1)!} \vertiii{\mathbb{E}_{wo,j-1} (\mathcal{A}) }\cdot \Big(n\varepsilon + (1+C) \cdot (j-1)  \Big)  
 		\end{align*}
 		
 		Then by using $\mathbb{E}_{wo, j-1}(\mathcal{A})= \mathbb{E}_{wo, j-1}(\mathcal{A}) - E(  \mathbb{E}_{wo, j-1}(\mathcal{A}) ) +   E( \mathbb{E}_{wo, j-1}(\mathcal{A})) ,$ we have
 		\begin{align*}
 		\vertiii{ \mathbb{E}_{wo, d}(\mathcal{A})- E( \mathbb{E}_{wo, d}(\mathcal{A})) } &  \leq \sum_{j=1}^{d} \frac{ n\varepsilon 
 			+ (1+C) \cdot (j-1) }{n - j +1} \vertiii{  \mathbb{E}_{wo, j-1}(\mathcal{A})- E( \mathbb{E}_{wo, j-1}(\mathcal{A}))  }   \\
 		&+ \sum_{j=1}^{d} \frac{ n\varepsilon + (1+C) \cdot (j-1) }{n - j +1}
 		\end{align*}
 		
 		Using the discrete case of Gr\"{o}nwall's lemma \cite{emmrich1999discrete}, 
 		\begin{equation*}
 		\vertiii{ \mathbb{E}_{wo, d}(\mathcal{A})- E( \mathbb{E}_{wo, d}(\mathcal{A}))  } \leq f(d) \exp( f(d) ), f(d): =  \sum_{j=1}^d \frac{ n\varepsilon + (1+C) \cdot (j-1) }{n - j +1} 
 		\end{equation*}
 		
 		It remains to find an upper bound for f(d)
 		\begin{align*}
 		f(d) & = \sum_{j=1}^d \frac{ (n-j+1)\varepsilon + (\varepsilon+ 1+C) \cdot (j-1) }{n - j +1}  
 		= \sum_{j=1}^d (\varepsilon + (\varepsilon+1+C) \cdot \frac{j-1}{n-j+1}   )  \\
 		& \leq  d\varepsilon + (\varepsilon +1+C) \sum_{j=1}^d \frac{j-1}{n-j+1}  \leq d\varepsilon + (\varepsilon+1+C)\cdot \frac{d(d-1)}{n-d+1}
 		\end{align*}
 		Since $d \ll n, f(d) \approx O(d\varepsilon)$, then
 		$	\vertiii{ \mathbb{E}_{wo,d}(\mathcal{A})- E( \mathbb{E}_{wo, d}(\mathcal{A}))  } \approx O(d\varepsilon).$ \\
(2) Since $\{ A_i\}$ are i.i.d., $E(\mathbb{E}_{wr,d} ) = \mathds{1}.$ 
\begin{align}\nonumber
\mathbb{E}_{wr, d} - E(\mathbb{E}_{wr,d} )  &= \frac{1}{n^d}\sum_{j_1, \cdots, j_d}x_{j_1}^* \cdots x_{j_d}^*x_{j_d} \cdots x_{j_1}  - \mathds{1} \\\label{27eq}
& = \frac{1}{n^{d-1}}\sum_{j_1, \cdots, j_{d-1}}x_{j_1}^* \cdots x_{j_{d-1}}^*\Big(\frac{1}{n} \sum_{j_d} x_{j_d}^*x_{j_d}  -1\Big)  x_{j_{d-1}}  \cdots x_{j_1} \\\nonumber
&~~~~~~~~+ \frac{1}{n^{d-1}}  \sum_{j_1, \cdots, j_{d-1}}x_{j_1}^* \cdots x_{j_{d-1}}^*x_{j_{d-1}}  \cdots x_{j_1} - \mathds{1} 
\end{align}
Thanks to $\vertiii{\sum{A_{i}^* A_{i}}-E\Big(\sum{A_{i}^*A_{i}} \Big)  } \leq n\varepsilon $ and H\"{o}ld inequality, the term \eqref{27eq} admits the following estimate
\begin{equation*}
\vertiii{\sum_{j_1, \cdots, j_{d-1}}x_{j_1}^* \cdots x_{j_{d-1}}^*\Big(\frac{1}{n} \sum_{j_d} x_{j_d}^*x_{j_d}  -1\Big)  x_{j_{d-1}}  \cdots x_{j_1} }  
	\leq   
	\varepsilon \vertiii{\sum_{i_1 = 1}^n x_{i_1}^*x_{i_1}}^{d-1}  
\end{equation*}
Since $\vertiii{\sum_{i_1 = 1}^n x_{i_1}^*x_{i_1}} \leq n (1 + \varepsilon) $ and triangle inequality, we get 
$$ \vertiii{\mathbb{E}_{wr, d} - E(\mathbb{E}_{wr,d} )} \leq \frac{1}{n^{d-1}} \varepsilon \cdot \big( n (1 + \varepsilon)\big)^{d-1}+ \vertiii{\mathbb{E}_{wr, d-1} - E(\mathbb{E}_{wr,d-1} )  }$$
Denote $a_d = \vertiii{\mathbb{E}_{wr, d} - E(\mathbb{E}_{wr,d} )} ,$ we have a sequence $\{a_d\}$ has $a_1 \leq \varepsilon$ and the iteration inequality,
\begin{equation*}
a_d \leq \varepsilon ( 1+ \varepsilon)^{d-1}  + a_{d_1}
\end{equation*}
Take the iteration for d times, we get
\begin{equation} 
 \vertiii{\mathbb{E}_{wr, d} (\mathcal{A} )- E(\mathbb{E}_{wr,d} (\mathcal{A}  )) } \leq \sum_{i=1}^{d} \varepsilon ( 1+ \varepsilon)^{i-1}  \leq O(d\varepsilon)
\end{equation}
Then by triangle inequality, we obtain
\begin{align*}
&\vertiii{   \mathbb{E}_{wo,d}(\mathcal{A})-  \mathbb{E}_{wr,d}(\mathcal{A})}   
 =  \vertiii{    \mathbb{E}_{wo,d}(\mathcal{A})- E( \mathbb{E}_{wo,d}(\mathcal{A})  ) - \Big(\mathbb{E}_{wr,d}(\mathcal{A}) -E( \mathbb{E}_{wr,d}(\mathcal{A})  ) \Big) } \\
& \leq \vertiii{  \mathbb{E}_{wo,d}(\mathcal{A})- E( \mathbb{E}_{wo,d}(\mathcal{A})  )   } + \vertiii{\mathbb{E}_{wr,d}(\mathcal{A}) -E( \mathbb{E}_{wr,d}(\mathcal{A})  )} \approx O(d\varepsilon)\qedhere
\end{align*}
 \end{proof}

\section{A Counterexample For the Symmetric Arithmetic Geometric Mean Inequality}
 In this section, we provide an example from free probability \cite{voiculescu2000lectures} which proves that symmetric arithmetic geometric mean inequality (SAGM) is not true in general.
 
Let's recall the construction of the reduced amalgamated free product of von Neumann algebras. Let $A_{1},...,A_{n}$ be a family of von Neumann algebras and let $\langle a \rangle$ be a common von Neumann subalgebra of $A_{k}$ generated by an element $a.$ We will assume that there is a normal faithful conditional expectation $E_{k}:A_{k}\rightarrow \langle a \rangle$ for each $k.$ Let $A=\ast_{\langle a\rangle}A_{k}$ be the reduced amalgamated free product of $A_{1},...,A_{n}$ over $\langle a \rangle$ with respect to the $E_{k}.$ We are concerned about  the case when $d=3$. 

\begin{fact} \label{rem33}
	We list some properties for the von Neumann algbras  $A_{j}:=\langle u_{j},a\rangle$ generated by $u_j $ and a:
	\begin{enumerate}
		\item $A_{j}$ is freely independent over $A$,then $E_{A_{j}}(X)=E_{A}(X).$
		\item If $X=b_{1}b_{2}...b_{n}$ with $b_{1}\in \mathring{A_{j}}, b_{i}\in \mathring{A}$, then $E_{A}(X)=0$.
		\item  $E_{A}(u_{k}ab)=\tau(u_{k})ab=0$ for all  $a,b\in A.$ 
		\item $\tau(c_{j_1} \cdots c_{j_n} ) = 0, c_{j_i} \in \mathring{A}_{j_i} ~ j_i  \in \{1, \cdots n-1 \}, c_{j_n} \in A_n \text{ and } j_1 \neq j_2 \neq \cdots \neq  j_n .$
	\end{enumerate}
\end{fact}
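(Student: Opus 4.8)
The plan is to derive all four items directly from the two structural features that \emph{define} the reduced amalgamated free product $A = \ast_{\langle a\rangle} A_k$, so that no new machinery beyond Voiculescu's theory is needed. First I would record these features explicitly. There is a normal faithful conditional expectation $E_A : A \to \langle a\rangle$ whose restriction to each $A_k$ coincides with the prescribed $E_k$, and the family $(A_k)$ is free with amalgamation over $\langle a\rangle$, meaning
\begin{equation*}
E_A(x_1 x_2 \cdots x_m) = 0 \qquad \text{whenever } E_{k_i}(x_i) = 0 \text{ and } k_1 \neq k_2 \neq \cdots \neq k_m .
\end{equation*}
The trace factors as $\tau = \tau_{\langle a\rangle} \circ E_A$ for a fixed trace $\tau_{\langle a\rangle}$ on $\langle a\rangle$, so that killing $E_A$ automatically kills $\tau$. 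I would also record the elementary bimodule observation that $\mathring{A}_k := \ker E_k$ is a $\langle a\rangle$-bimodule: if $c \in \mathring{A}_k$ and $b \in \langle a\rangle$ then $E_k(cb) = E_k(c) b = 0$ and $E_k(bc) = b\,E_k(c) = 0$, so $cb,\,bc \in \mathring{A}_k$.

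With these in hand, items (1) and (2) are immediate. Item (1) is exactly the compatibility $E_A|_{A_j} = E_j = E_{A_j}$ built into the construction. Item (2) is the amalgamated-freeness relation read off for the word $b_1 b_2 \cdots b_n$; here I would make explicit that the intended hypothesis is that the $b_i$ lie in the centered parts of the respective free subalgebras with consecutively distinct indices (this is what the symbols $\mathring{A}_j,\mathring{A}$ encode in the computation), since a literal reading with no alternation fails already for $b_1 b_1^*$.

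For (3) I would first clarify the overloaded notation: $u_k$ is a Haar unitary chosen free from the generator $a$, so that $E_A(u_k) = E_{\langle a\rangle}(u_k) = \tau(u_k)\,\mathds{1}$ is a scalar, and the elements written $a,b$ are amalgam elements in $\langle a\rangle$. They then factor through the conditional expectation by the bimodule observation, giving $E_A(u_k a b) = E_A(u_k)\,ab = \tau(u_k)\,ab$, which vanishes by the centering condition $\tau(u_k)=0$. Item (4) is the one demanding a genuine argument. Here I would split the final, possibly non-centered factor as $c_{j_n} = \mathring{c}_{j_n} + E_n(c_{j_n})$ with $\mathring{c}_{j_n} \in \mathring{A}_n$ and $E_n(c_{j_n}) \in \langle a\rangle$. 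Applying $\tau = \tau_{\langle a\rangle}\circ E_A$, the centered summand yields a fully alternating centered word $c_{j_1}\cdots c_{j_{n-1}}\,\mathring{c}_{j_n}$, annihilated by the freeness relation of the first paragraph. For the remaining summand I would absorb the scalar $E_n(c_{j_n})\in\langle a\rangle$ into $c_{j_{n-1}}$ on the right; by the bimodule observation $c_{j_{n-1}}\,E_n(c_{j_n})$ is still centered in $A_{j_{n-1}}$, so the word collapses to the $n-1$ alternating centered factors $c_{j_1}\cdots\big(c_{j_{n-1}}E_n(c_{j_n})\big)$ with indices $j_1 \neq \cdots \neq j_{n-1}$, again killed by freeness.

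The main obstacle is the bookkeeping in (4): I must verify that absorbing the $\langle a\rangle$-valued expectation never collides two equal consecutive indices (it cannot, since it merges into the adjacent block of its own index block $j_{n-1}$), and that the passage between $\tau$ and $E_A$ is legitimate, which is precisely where the diagonal embedding of $\langle a\rangle$ and the factorization $\tau = \tau_{\langle a\rangle}\circ E_A$ enter. A secondary, purely expository point is reconciling the notational overloading flagged above, which I would settle once at the outset. Beyond this careful bookkeeping the statements are standard consequences of freeness with amalgamation, so no additional ideas should be required.
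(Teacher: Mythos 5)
The paper never actually proves this Fact---it is stated as a list of standard consequences of the amalgamated free product construction and then used immediately in Lemma 3.4 and Theorem 3.2---so your proposal has to be judged on its own merits. Your treatments of items (2), (3) and (4) are correct and are the standard arguments: (2) is the defining freeness-with-amalgamation relation once the hypothesis is read, as you do, as an alternating word of centered factors from the free subalgebras; (3) is right $\langle a\rangle$-bimodularity of $E_{\langle a\rangle}$ together with $E_{\langle a\rangle}(u_k)=\tau(u_k)\mathds{1}$ (which uses the assumed freeness of $u_k$ from $a$); and your absorption argument for (4)---splitting $c_{j_n}=\mathring{c}_{j_n}+E_{j_n}(c_{j_n})$, killing the centered summand by freeness, and merging the amalgam summand into $c_{j_{n-1}}$, which stays centered by bimodularity and creates no collision of consecutive indices---is exactly the right proof, given the factorization $\tau=\tau\circ E_{\langle a\rangle}$.

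The genuine gap is item (1). You read it as the compatibility $E_A|_{A_j}=E_j$ ``built into the construction,'' but that is neither what the paper means by $E_{A_j}$ nor the statement it later uses. In the proof of Lemma 3.4 the paper applies $E_{A_j}$---the trace-preserving conditional expectation of the \emph{whole} free product onto the subalgebra $A_j=\langle u_j,a\rangle$---to words such as $u_k a u_k(1-a^2)u_k^*au_k^*$ with $k\neq j$, i.e.\ to elements that do \emph{not} lie in $A_j$, and invokes item (1) in the form: if $X$ lies in a subalgebra free from $A_j$ over $\langle a\rangle$, then $E_{A_j}(X)=E_{\langle a\rangle}(X)$. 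Your reading concerns only elements of $A_j$, on which $E_{A_j}$ is the identity, so it is vacuous for the later application. The missing argument is short but must be supplied: write $X=\mathring{X}+E_{\langle a\rangle}(X)$ with $\mathring{X}:=X-E_{\langle a\rangle}(X)$; for any $y\in A_j$ one has $\tau(y\mathring{X})=\tau(\mathring{y}\,\mathring{X})+\tau\bigl(E_{\langle a\rangle}(y)\,\mathring{X}\bigr)$, where the first term vanishes by freeness over $\langle a\rangle$ (an alternating centered word) and the second equals $\tau\bigl(E_{\langle a\rangle}(y)\,E_{\langle a\rangle}(\mathring{X})\bigr)=0$ by bimodularity; hence $E_{A_j}(\mathring{X})=0$ and $E_{A_j}(X)=E_{\langle a\rangle}(X)=E_A(X)$, since $E_{\langle a\rangle}(X)\in\langle a\rangle\subset A_j$. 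With this substitution for your item (1), the rest of your proposal stands.
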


 \begin{theorem}\label{counter}
 Let $u_{j}$ be unitaries and $\{ u_1, \cdots, u_n, a \}$ be freely independent operators such that $$a_{j}=au_{j},~\tau (u_j)=\tau(a)=0,~\tau(a^{2})=1~\text{and}~a^2 \neq 1.$$ Then 
 \begin{align*}
 \mathbb{E}_{wo,3}(a_{j_{1}},a_{j_{2}},a_{j_{3}})&=\frac{1}{n(n-1)(n-2)}\sum\limits_{ \langle  j_{1},j_{2},j_{3} \rangle = \dot{0} }a_{j_{1}}a_{j_{2}}a_{j_{3}}a_{j_{3}}^*a_{j_{2}}^*a_{j_{1}}^*\\
\mathbb{E}_{wr, 3}(a_{j_{1}},a_{j_{2}},a_{j_{3}})&=\frac{1}{n^{3}}\sum\limits_{ \langle j_{1},j_{2},j_{3} \rangle = \dot{1} }a_{j_{1}}a_{j_{2}}a_{j_{3}}a_{j_{3}}^*a_{j_{2}}^*a_{j_{1}}^*
 \end{align*} 
 do not satisfy 
 $$ \mathbb{E}_{wo,3} (a_{j_1}, a_{j_2}, a_{j_3}) \leq \mathbb{E}_{wr, 3}  (a_{j_1}, a_{j_2}, a_{j_3}).$$
 
 \end{theorem}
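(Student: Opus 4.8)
The plan is to reduce the claimed operator inequality to a non-equality statement, which is then settled by a conditional-expectation computation in the free product. First I would use unitarity to collapse the innermost factor: since $a_{j_3}a_{j_3}^*=a\,u_{j_3}u_{j_3}^*\,a^*=a^2$ (using $a=a^*$), the word $a_{j_1}a_{j_2}a_{j_3}a_{j_3}^*a_{j_2}^*a_{j_1}^*$ equals $T_{j_1,j_2}:=au_{j_1}au_{j_2}a^2u_{j_2}^*au_{j_1}^*a$, which does not depend on $j_3$. Summing over $j_3$ then gives $\mathbb{E}_{wo,3}=\tfrac{1}{n(n-1)}S$ and $\mathbb{E}_{wr,3}=\tfrac{1}{n^2}(S+D)$, where $S=\sum_{j_1\neq j_2}T_{j_1,j_2}$ and $D=\sum_j T_{j,j}$, so that
\[
\mathbb{E}_{wr,3}-\mathbb{E}_{wo,3}=\tfrac{1}{n^2(n-1)}\big((n-1)D-S\big).
\]
Thus $\mathbb{E}_{wo,3}\le\mathbb{E}_{wr,3}$ is equivalent to positivity of $R:=(n-1)D-S$.

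Next I would exploit the trace. Using Fact \ref{rem33} (alternating centred words have vanishing trace), a short expansion with $a^2=\mathds{1}+c$, where $c:=a^2-\mathds{1}$, shows $\tau(T_{j_1,j_2})=\tau(a^2)=1$ for \emph{every} pair, whether or not $j_1=j_2$; hence $\tau(\mathbb{E}_{wo,3})=\tau(\mathbb{E}_{wr,3})=1$, i.e. $\tau(R)=0$. Since $\tau$ is a faithful trace, a positive element of trace zero must vanish. Therefore, if $R$ were positive it would be zero, and to disprove $\mathbb{E}_{wo,3}\le\mathbb{E}_{wr,3}$ it suffices to prove the strictly weaker statement $R\neq 0$, i.e. $(n-1)D\neq S$.

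To detect $R\ne0$ I would project onto $W^*(a,u_1)$. Writing $T_{jj}-T_{jj'}=au_ja\,(u_jcu_j^*-u_{j'}cu_{j'}^*)\,au_j^*a$ gives $R=\sum_j au_ja\big(n\,u_jcu_j^*-\sum_k u_kcu_k^*\big)au_j^*a$. Using the rule $E_B(uxu^*)=\tau(x)\mathds{1}$ for a centred unitary $u$ free from $B\ni x$ (a consequence of Fact \ref{rem33}), together with the tower property and freeness of the distinct $u_j$ over $W^*(a)$, every summand with $j\neq1$ is annihilated by $E_{W^*(a,u_1)}$, while the $j=1$ summand reduces cleanly, leaving
\[
E_{W^*(a,u_1)}(R)=(n-1)\,Q,\qquad Q:=au_1au_1\,(a^2-\mathds{1})\,u_1^*au_1^*a.
\]
Since a conditional expectation sends $0$ to $0$, it remains to show $Q\neq0$. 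Taking the $u_j$ to be free Haar unitaries, $\tau(Q^2)$ is computed by summing over the non-crossing matchings of the eight occurrences of $u_1^{\pm}$: every matching enclosing a mean-zero factor ($a$ or $c$) between an innermost matched pair contributes $0$, and the single surviving matching telescopes to $\tau(Q^2)=\tau\big((a^2-\mathds{1})^2\big)$. Because $a^2\neq\mathds{1}$ and $\tau$ is faithful, this is strictly positive, so $Q\neq0$, hence $R\neq0$, and the inequality fails.

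The main obstacle is the bookkeeping in the last two steps: verifying that all $j\neq1$ terms genuinely vanish under $E_{W^*(a,u_1)}$, and carrying out the non-crossing-partition evaluation of $\tau(Q^2)$ without error. It is worth recording the sanity check that $a^2\neq\mathds{1}$ is both essential and sharp: if $a^2=\mathds{1}$ then each $T_{j_1,j_2}=\mathds{1}$, so $R=0$ and $\mathbb{E}_{wo,3}=\mathbb{E}_{wr,3}$, and the entire discrepancy is carried by $\tau\big((a^2-\mathds{1})^2\big)=\|a^2-\mathds{1}\|_2^2$.
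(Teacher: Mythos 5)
Your proposal is correct in substance and follows the same core strategy as the paper's proof: establish the trace identity $\tau(\mathbb{E}_{wo,3})=\tau(\mathbb{E}_{wr,3})=\tau(a^2)^3$ (the paper's Lemma \ref{lem32}), use faithfulness of $\tau$ to convert the hypothetical order inequality into the exact identity $\mathbb{E}_{wo,3}=\mathbb{E}_{wr,3}$, and then contradict this by applying the conditional expectation onto $A_1=W^*(a,u_1)$, which isolates precisely the element $Q=a_1^2(a^2-\mathds{1})(a_1^*)^2$. Within this common strategy your bookkeeping is tidier: you collapse the $j_3$-sum at once (the word is independent of $j_3$ since $a_{j_3}a_{j_3}^*=a^2$), while the paper expands $\mathds{1}-\mathbb{E}_{wo,3}$ and $\mathds{1}-\mathbb{E}_{wr,3}$ by the folding identity and subtracts; the two resulting expressions agree via $a_ja_k(\mathds{1}-a^2)a_k^*a_j^*=a_ja^2a_j^*-T_{jk}$, and your vanishing claims under $E_{A_1}$ are exactly the content of the paper's lemma on $E_{A_j}$ (Lemma 3.4). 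Your reduction "$R\geq 0$ and $\tau(R)=0$ imply $R=0$, so it suffices to show $R\neq 0$" is the contrapositive of the paper's contradiction argument, and the coefficients match: $E_{A_1}(\mathbb{E}_{wr,3}-\mathbb{E}_{wo,3})=\tfrac{1}{n^2}Q$ in both treatments.

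The one point you should repair is the final step. You prove $Q\neq 0$ only after \emph{taking the $u_j$ to be free Haar unitaries}, whereas Theorem \ref{counter} is asserted for arbitrary unitaries with $\tau(u_j)=0$ free from $a$; as written you exhibit one counterexample family (which is enough to refute \eqref{oconj}, but is formally less than the stated theorem). The Haar assumption is in fact unnecessary: expanding $\tau(Q^2)$ with $a^2=\mathds{1}+c$ and cancelling adjacent $u_1^*u_1$, every surviving word is alternating in centred elements of $W^*(u_1)$ and $W^*(a)$ except the single telescoping term, so freeness of a single centred unitary already yields $\tau(Q^2)=\tau\big((a^2-\mathds{1})^2\big)>0$; no higher moments of $u_1$ enter. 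With that observation your argument covers the full statement. It is worth noting that at this very spot your proof is actually more complete than the paper's: the paper passes from $a_j^{2}(\mathds{1}-a^{2})(a_j^{*})^{2}=0$ to $a^{2}=\mathds{1}$ with no justification (one cannot simply cancel $a$, which is not assumed invertible), and your trace computation of $\tau(Q^2)$ is exactly the missing justification.
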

 
 In this following, we will replace $\mathbb{E}_{wo,3} $ with  $\mathbb{E}_{wo}$ (respectively replace $\mathbb{E}_{wr,3} $ with  $\mathbb{E}_{wr}$). First We need  to prove two main lemmas.
 \begin{lemma}\label{lem32}
 $\tau( \mathbb{E}_{wo}(a_{j_{1}},a_{j_{2}},a_{j_{3}}))=\tau( \mathbb{E}_{wr}(a_{j_{1}},a_{j_{2}},a_{j_{3}}))=\tau(a^{2})^3$
 \end{lemma}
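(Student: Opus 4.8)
The plan is to show that \emph{every} individual summand has the same trace, namely $\tau(a^2)^3$, so that both the without- and with-replacement averages --- being averages of the constant $\tau(a^2)^3$ over their respective index sets --- collapse to $\tau(a^2)^3$. Since $\tau$ is linear, it suffices to evaluate $T(j_1,j_2,j_3):=\tau\big(a_{j_1}a_{j_2}a_{j_3}a_{j_3}^*a_{j_2}^*a_{j_1}^*\big)$ for an arbitrary ordered triple.

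First I would collapse the innermost pair. Writing $a_j=au_j$ and using that $a$ is self-adjoint together with $u_{j_3}u_{j_3}^*=\mathds{1}$, one gets $a_{j_3}a_{j_3}^*=aa^*=a^2$, so $T$ loses all dependence on $j_3$ and equals $\tau\big(a\,u_{j_1}\,a\,u_{j_2}\,a^2\,u_{j_2}^*\,a\,u_{j_1}^*\,a\big)$; by traciality this is $\tau\big(a^2 u_{j_1} a u_{j_2} a^2 u_{j_2}^* a u_{j_1}^*\big)$.

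Next I would evaluate this trace by freeness. Decompose each of the two factors $a^2=\tau(a^2)\mathds{1}+\mathring{a^2}$ into its scalar part and its centered part $\mathring{a^2}$, and recall $\tau(a)=\tau(u_j)=0$. Expanding the product, every resulting word is an alternating product of centered elements drawn from the free subalgebras $\langle u_{j_1}\rangle$, $\langle u_{j_2}\rangle$ and $\langle a\rangle$ --- except the single term in which both $a^2$'s are replaced by the scalar $\tau(a^2)$. By Fact~\ref{rem33}(2),(4) each alternating centered word has vanishing trace, while the surviving scalar term equals $\tau(a^2)^2\,\tau\big(u_{j_1}a^2 u_{j_1}^*\big)=\tau(a^2)^3$. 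Hence $T(j_1,j_2,j_3)=\tau(a^2)^3$.

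The delicate point, and the one I would treat most carefully, is that this vanishing must hold \emph{also} when indices coincide, which is exactly the situation that occurs in the with-replacement sum. When $j_1=j_2$ only the two free subalgebras $\langle u_{j_1}\rangle$ and $\langle a\rangle$ are available, so I must check that the $u$'s never become adjacent after the cancellations. Because the letters $a$ and $u$ strictly alternate by position and each centered $\mathring{a^2}$ sits precisely where a grouped $a^2$ stood (i.e.\ between two $u$-letters), every non-scalar term remains a genuine alternating centered word and still has trace $0$; a short separate check shows the two partially centered terms also vanish using $\tau(\mathring{a^2})=0$. Thus $T$ is the constant $\tau(a^2)^3$ over all triples --- coincident or not --- and averaging over the distinct triples (for $\mathbb{E}_{wo}$) or over all triples (for $\mathbb{E}_{wr}$) yields $\tau(\mathbb{E}_{wo})=\tau(\mathbb{E}_{wr})=\tau(a^2)^3$.
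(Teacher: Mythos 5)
Your proof is correct and follows essentially the same route as the paper's: both arguments reduce the lemma to showing that every individual summand $\tau(a_{j_1}a_{j_2}a_{j_3}a_{j_3}^*a_{j_2}^*a_{j_1}^*)$ equals $\tau(a^2)^3$, by collapsing $a_{j_3}a_{j_3}^*=a^2$, splitting $a^2=\tau(a^2)\mathds{1}+\mathring{a^2}$, and killing the centered alternating words by freeness, including the re-centering step forced by the $u_{j_2}u_{j_2}^*=\mathds{1}$ cancellation (which the paper handles by sequential peeling, one centering at a time, and you handle by expanding both $a^2$'s at once plus a separate check on the partially centered terms). Your explicit verification that the vanishing survives coincident indices --- needed for $\mathbb{E}_{wr}$ --- is a point the paper leaves implicit, but the underlying mechanism is identical.
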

 \begin{proof}
 We will use the free independent condition which says that $\tau(\mathring{a_{i_{1}}}\mathring{a_{i_{2}}}...\mathring{a_{i_{n}}})=0$ if $i_1 \neq i_2 \neq \cdots \neq i_n$ and $\mathring{a} := a - \tau(a)$. 
 We claim that for all choices $j_1, j_2, j_3$, we have
 \begin{equation}
 \tau(a_{j_{1}}a_{j_{2}}a_{j_{3}}a_{j_{3}}^*a_{j_{2}}^*a_{j_{1}}^*)=\tau(a^{2})^{3} 
 \end{equation}

 Let us start by using the definition of the operator $a_{i_{j}}$ frequently.
 \begin{align*}
  &\tau(a_{j_{1}}a_{j_{2}}\underbrace{a_{j_{3}}a_{j_{3}}^*}_{=a^{2}}a_{j_{2}}^*a_{j_{1}}^*)=\tau(a_{j_{1}}a_{j_{2}}a^{2}a_{j_{2}}^*a_{j_{1}}^*)\\
  &=\tau(a_{j_{1}}a_{j_{2}}\mathring{(a^2)}a_{j_{2}}^*a_{j_{1}}^*)+\tau(a_{j_{1}}a_{j_{2}}a_{j_{2}}^*a_{j_{1}}^*)\tau(a^2)\\
  &=\tau(a_{j_{1}}a_{j_{2}}\mathring{(a^2)}a_{j_{2}}^*a_{j_{1}}^*)+\tau(a_{j_{1}}a^{2}a_{j_{1}}^*)\tau(a^2)\\
  &=\tau(a_{j_{1}}a_{j_{2}}\mathring{(a^2)}a_{j_{2}}^*a_{j_{1}}^*)+\tau(\mathring{a^{2}}a_{j_{1}}^*a_{j_{1}})\tau(a^2))+\tau(a^2)\tau(a_{j_{1}}^*a_{j_{1}})\tau(a^2)\\
  &=\tau(a_{j_{1}}a_{j_{2}}\mathring{(a^2)}a_{j_{2}}^*a_{j_{1}}^*)+\underbrace{\tau(\mathring{a^{2}}u_{j_{1}}^*\mathring{a^2}u_{j_{1}})}_{=0}\tau(a^2)+\underbrace{\tau(\mathring{a^{2}}u_{j_{1}}^*u_{j_{1}})}_{=0}\tau(a^2)\tau(a^2)+\tau(a^2)\tau(a^2)\tau(a^2)\\
  &=\tau(a_{j_{2}}\mathring{(a^2)}a_{j_{2}}^*a_{j_{1}}^*a_{j_{1}})+\tau(a^2)\tau(a^2)\tau(a^2)\\
  &=\tau(a_{j_{2}}\mathring{(a^2)}a_{j_{2}}^*u_{j_{1}}^{*}a^{2}u_{j_{1}})+\tau(a^2)\tau(a^2)\tau(a^2)\\
  &=\underbrace{\tau(a_{j_{2}}\mathring{(a^2)}a_{j_{2}}^*u_{j_{1}}^{*}\mathring{a^{2}}u_{j_{1}})}_{=0}+\underbrace{\tau(u_{j_{2}}a\mathring{(a^2)}u_{j_{2}}^*a)}_{=0}\tau(a^2)+\tau(a^2)\tau(a^2)\tau(a^2)\\
  &=\tau(a^2)\tau(a^2)\tau(a^2)=\tau(a^2)^3=1.\qedhere
 \end{align*}
 \end{proof}
  The above zero terms hold thanks to freeness.  We will use the following folding techniques introduced by Junge in section 6 of \cite{junge2006operator} to write $\mathds{1}-\mathbb{E}_{wo}$ and $\mathds{1}-\mathbb{E}_{wr}$ for the operators $a_{j}$ where $a_{j}=au_{j}$. 
 Recall that 
 \begin{align*}
 \mathds{1}-a_{j_1}a_{j_2} a_{j_3} a_{j_3}^*a_{j_2}^*a_{j_1}^*&= \mathds{1}-a_{j_1}a_{j_1}^*+a_{j_1}( \mathds{1}-a_{j_2}a_{j_3}a_{j_3}^*a_{j_2}^*)a_{j_1}^*\\
 &=\mathds{1}-a_{j_1}a_{j_1}^*+a_{j_1}(\mathds{1} -a_{j_2}a_{j_2}^*)a_{j_1}^*+a_{j_1}a_{j_2}( \mathds{1}-a_{j_3}a_{j_3}^*)a_{j_2}^*a_{j_1}^*.
 \end{align*}  
 Then we have 
 \begin{align}\nonumber 
 \mathds{1}- \mathbb{E}_{wo}(a_{l},a_{j},a_{k})&=\frac{1}{n}\sum\limits_{j=1}^{n}1-a^2+\frac{1}{n(n-1)}\sum\limits_{j=1,j\neq k}^{n}a_{j}a_{k}a_{k}^*a_{j}^*\\\nonumber
 &+\frac{1}{n(n-1)(n-2)}\sum\limits_{j\neq k\neq l}a_{j}a_{k}(1-a_{l}a_{l}^*)a_{k}^*a_{j}^*\\\nonumber
 &=(1-a^2)+\frac{1}{n}\sum\limits_{j=1}^{n}a_{j}(1-a^{2})a_{j}^*+\frac{1}{n(n-1)}\sum\limits_{j\neq k}a_{j}a_{k}(1-a^2)a_{k}^*a_{j}^*\\\nonumber
 &=(1-a^2)+\frac{1}{n}\sum\limits_{j}a_{j}(1-a^2)a_{j}^*+\frac{1}{n(n-1)}\sum\limits_{j,k}a_{j}a_{k}(1-a^2)a_{k}^*a_{j}^*\\\
 &-\frac{1}{n(n-1)}\sum\limits_{j}a_{j}^2(1-a^2)(a_{j}^*)^{2}
 \end{align}
 and for the expectation for with-replacement
 \begin{align}
 \mathds{1}-\mathbb{E}_{wr}(a_{l},a_{j},a_{k})=(1-a^2)+\frac{1}{n}\sum\limits_{j=1}^{n}a_{j}(1-a^2)a_{j}^*+\frac{1}{n^2}\sum\limits_{j\neq k}a_{j}a_{k}(1-a^2)a_{k}^*a_{j}^*
 \end{align} 
 Then we take the difference between $1-\mathbb{E}_{wo}$ and $1- \mathbb{E}_{wr}$, then we have 
 \begin{align*}
 \mathbb{E}_{wo} - \mathbb{E}_{wr} &= \mathds{1}-\mathbb{E}_{wr}-(\mathds{1}- \mathbb{E}_{wo}) \\
& =\Big[\frac{1}{n^2}-\frac{1}{n(n-1)}\Big]\sum\limits_{j,k}a_{j}a_{k}(1-a^2)a_{k}^*a_{j}^*+\frac{1}{n(n-1)}\sum\limits_{j=1}^{n}a_{j}(a_{j}(1-a^{2})a_{j}^*)a_{j}^*
 \end{align*}

The next Lemma shows that some terms will vanish under the conditional expectation $E_{A_{j}}$.
\begin{lemma} The conditional expectation $E_{A_j}$ has the following properties: 
\begin{enumerate}
\item [(i)]$E_{A_{j}}(a_{k}^2(1-a^{2})(a_{k}^*)^2)=0$ for $k\neq j$
\item [(ii)]$E_{A_{j}}(a_{k}a_{l}(1-a^2)a_{l}^*a_{k}^*)=0$ unless $k=l=j$ 
\end{enumerate}
\end{lemma}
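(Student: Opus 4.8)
The plan is to reduce both statements to the vanishing of the conditional expectation $E$ onto the amalgam $\langle a\rangle$ on reduced alternating words, exactly as in Fact~\ref{rem33} and Lemma~\ref{lem32}. Throughout write $D=\langle a\rangle$, recall that $a$ is self-adjoint so that $a_k=au_k$ and $a_k^*=u_k^*a$, and set $b:=1-a^2\in D$. The crucial elementary observations are $\tau(a)=0$ and $\tau(b)=\tau(\mathds{1})-\tau(a^2)=0$, so $a$ and $b$ are $\tau$-centered elements of $D$, while $u_k,u_k^*$ are $\tau$-centered elements of the free copy $M_k=\langle u_k\rangle$. Hence any word that alternates between the $M_m$'s and $D$ with all letters centered is reduced, and $E$ annihilates it (by Fact~\ref{rem33}(2); equivalently, by faithfulness of $\tau$ it suffices to check $\tau(w\,d)=0$ for $d\in D$, which is Fact~\ref{rem33}(4)). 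I will also use repeatedly that, by Fact~\ref{rem33}(1), $E_{A_j}$ restricted to the algebra generated by $\{A_m:m\neq j\}$ equals $E$, and that $E_{A_j}$ is an $A_j$-bimodule map.

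For (i), since $k\neq j$ I first write $a_k^2(1-a^2)(a_k^*)^2=a\,(u_k a u_k\,b\,u_k^* a u_k^*)\,a\in A_k$. By Fact~\ref{rem33}(1) its image under $E_{A_j}$ equals its image under $E$, which by the bimodule property is $a\,E(u_k a u_k\,b\,u_k^* a u_k^*)\,a$. The inner word alternates between $M_k$ and $D$ with every letter centered and contains $M_k$-letters, so $E$ kills it and (i) follows.

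For (ii), set $V:=a_k a_l(1-a^2)a_l^*a_k^*=a\,(u_k a u_l\,b\,u_l^* a u_k^*)\,a$. If $k=l$ then $V$ is precisely the expression of (i), which $E_{A_j}$ annihilates whenever $j\neq k$; the only surviving possibility is $j=k=l$, the exempt case. Assume $k\neq l$ and put $R:=a u_l b u_l^* a\in A_l$. A first application of the reduced-word principle gives $E(u_l b u_l^*)=0$, hence $E(R)=0$, i.e. $R$ is centered over $D$. Writing $V=a\,u_k R u_k^*\,a$, it remains to show $E_{A_j}(u_k R u_k^*)=0$ for every $j$. When $j\notin\{k,l\}$ this follows from Fact~\ref{rem33}(1) since $u_k R u_k^*$ is a reduced word over $D$ with letters $u_k\in\mathring A_k$, $R\in\mathring A_l$, $u_k^*\in\mathring A_k$. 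When $j=k$ the bimodule property pulls $u_k,u_k^*\in A_k$ outside, leaving $u_k\,E(R)\,u_k^*=0$.

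The main obstacle is the case $j=l$, where the foreign unitaries $u_k$ sit on the outside and cannot be absorbed into the conditioning algebra. Here I would test against $A_l$: by faithfulness of $\tau$, $E_{A_l}(u_k R u_k^*)=0$ is equivalent to $\tau(u_k R u_k^*\,Y)=0$ for all $Y\in A_l$. Using traciality this equals $\tau(R\,u_k^* Y u_k)$; I then decompose $Y=E(Y)+\mathring Y$ over $D$ and further $E(Y)=\tau(Y)\mathds{1}+\mathring{E(Y)}$ over $\mathbb{C}$, and use $\tau(R)=\tau(E(R))=0$. Each resulting summand becomes the trace of a reduced word over $D$ alternating between $A_l$ (the centered letters $R,\mathring Y$) and $A_k$ (the centered letters built from $u_k,u_k^*$ together with the intervening element of $D$), so by amalgamated freeness over $D$ every such trace vanishes. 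This yields $E_{A_l}(u_k R u_k^*)=0$ and hence $E_{A_l}(V)=0$. Combining all cases shows $E_{A_j}(V)=0$ unless $k=l=j$, completing (ii).
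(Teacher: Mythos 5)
Your proposal is correct and follows essentially the same route as the paper's proof: strip the outer copies of $a$ (and, when $j=k$, the unitaries $u_k$) by the $A_j$-bimodule property, replace $E_{A_j}$ by $E_{\langle a\rangle}$ on subalgebras free from $A_j$ over $\langle a\rangle$ (Fact \ref{rem33}(1)), and kill what remains by testing against the conditioning algebra via faithfulness of $\tau$ and the reduced centered-word principle. The differences are cosmetic: you handle $k=l\neq j$ by explicit reduction to part (i), which the paper's case list leaves implicit, and in the case $j=l$ you add traciality plus the finer decomposition $Y=\tau(Y)\mathds{1}+\mathring{E(Y)}+\mathring{Y}$, where the paper simply decomposes $\omega=E_A(\omega)+\mathring{\omega}$ and absorbs the amalgam part into an adjacent letter.
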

\begin{proof}
Let us start by $(i)$ and assume $k \neq j, $ 
\begin{align*}
E_{A_{j}}(au_{k}au_{k}(1-a^{2})u_{k}^*au_{k}^*a)& =aE_{A_{j}}(u_{k}au_{k}(1-a^{2})u_{k}^*au_{k}^*)a\\
&=a E_{A}(u_{k}au_{k}(1-a^{2})u_{k}^*au_{k}^*)a=0
\end{align*}
For $(ii)$ we will consider the following cases for $j,l,k$ :
\begin{enumerate}
\item If $j \neq k\neq l$ then $E_{A_{j}}(a_{k}a_{l}(1-a^2)a_{l}^*a_{k}^*)=0$
\item If $j=k$ then $E_{A_{j}}(a_{k}a_{l}(1-a^2)a_{l}^*a_{k}^*)=0$
\item If $j=l$ and $k\neq l$ $E_{A_{j}}(a_{k}a_{l}(1-a^2)a_{l}^*a_{k}^*)=0$
\end{enumerate}
In case (1), we deduce from above
$$E_{A_{j}}(a_{k}a_{l}(1-a^2)a_{l}^*a_{k}^*)= E_{A_{j}}(au_{k}au_{l}(1-a^2)u_{l}^*au_{k}^*a)
=aE_{A_{j}}(u_{k}au_{l}(1-a^2)u_{l}^*au_{k}^*)a   $$
Indeed  let $\omega \in A_j $ and $ \omega := E_A( \omega )  + \omega -E_A( \omega ) = E_A( \omega ) +   \mathring{w}$, then 
\begin{align*}
E_{A_{j}}&(u_{k}au_{l}(1-a^2)u_{l}^*au_{k}^*) = \tau ( u_{k}au_{l}(1-a^2)u_{l}^*au_{k}^* \omega ) \\
& = \underbrace{\tau (  u_{k}au_{l}(1-a^2)u_{l}^*au_{k}^* \mathring{\omega} ) }_{= 0}+\underbrace{ \tau ( u_{k}au_{l}(1-a^2)u_{l}^*au_{k}^* E_A(\omega ))}_{= 0}
\end{align*}
 by (2) and (4) in Remark \ref{rem33}.

In case (2) we have
\begin{align}
E_{A_{j}}(au_{j}au_{l}(1-a^{2})u_{l}^*au_{j}^*a)=au_{j}aE_{A_{j}}(u_{l}(1-a^{2})u_{l}^*)au_{j}^*a=0.
\end{align}
In case (3), when $j=l$ and $k\neq l$, we have 
\begin{align}
E_{A_{j}}(au_{k}au_{j}(1-a^{2})u_{j}^*au_{k}^*a)=aE_{A_{j}}(u_{k}au_{j}(1-a^{2})u_{j}^*au_{k}^*)a = 0
\end{align}
Since it's the same with case (1).
\end{proof}
Now we can prove the theorem by using the two lemma above.
\begin{proof}[Proof of Theorem \ref{counter}]
Assume $n \geq 2 $ and $\mathbb{E}_{wo} ( \mathcal{A} ) \leq \mathbb{E}_{wr}  ( \mathcal{A}) .$ Then by Lemma \ref{lem32}
$$\tau ( \mathbb{E}_{wo}( \mathcal{A})  )  = \tau( \mathbb{E}_{wr} (\mathcal{A} )) = \tau (a^2)^3 = 1 $$
implies $$  \tau ( \mathbb{E}_{wr} (\mathcal{A}  )- \mathbb{E}_{wo}( \mathcal{A})  )   = 0$$
Since $\tau$ is faithful, then 
$$x \geq 0, \tau(x) = 0 \Longrightarrow    x = 0.$$
We get $$\mathbb{E}_{wo} ( \mathcal{A}) =  \mathbb{E}_{wr}( \mathcal{A}).$$ 
Therefore, 
$$E_{A_{j}}( \mathbb{E}_{wo}- \mathbb{E}_{wr}  ) = 0. $$
Indeed 
\begin{align*}
E_{A_{j}}(\mathbb{E}_{wo}- \mathbb{E}_{wr} )&=\frac{-1}{n^{2}(n-1)}a_{j}^{2}(1-a^{2})(a_{j}^*)^2+\frac{1}{n(n-1)}a_{j}^2(1-a^{2})(a_{j}^{*})^2 \\
& = \frac{1}{n^2}a_{j}^2(1-a^{2})(a_{j}^{*})^2 .
\end{align*}
 Hence
$$ E_{A_{j}}(a_{j}^{2}(1-a^{2})(a_{j}^*)^2)=0$$
which means 
$$  a_{j}^{2}(1-a^{2})(a_{j}^*)^2=0 .$$
Therefore
$$ a^2=1.  $$
Then we get contradiction with the assumption.
\end{proof}

Following Theorem \ref{derivation},  we may replace the $\vertiii{\cdot}$ with operator norm for free products, we  have the following:
\begin{cor}
	Let $ \mathcal{A} =
	\{ a_i\}_{i=1}^n$ be freely independent operators such that $\tau(a_i^*a_i) = 1.$ If $\frac{1}{n}\|\sum a_i^* a_i - \tau(\sum a_i^* a_i  )  \| \leq \varepsilon  \text{ and }d \ll n,$
then 
\begin{enumerate}
	\item   $\|  \mathbb{E}_{wo,d}(\mathcal{A})- E( \mathbb{E}_{wo,d}(\mathcal{A})) \| \leq  O(d\varepsilon), $
	\item $\|  \mathbb{E}_{wo,d}(\mathcal{A}) \|\leq (1+  O(d\varepsilon)) \|\mathbb{E}_{wr,d}(\mathcal{A})\|$
\end{enumerate}
\end{cor}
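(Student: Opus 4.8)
The plan is to deduce the corollary directly from Theorem \ref{derivation} by transporting the argument from the von Neumann tracial setting to the operator-norm setting for free products. First I would observe that the hypothesis $\tau(a_i^*a_i)=1$ together with free independence gives $E(a_i^*a_i)=\mathds{1}$ in the appropriate sense, so that $E(\mathbb{E}_{wo,d}(\mathcal{A}))=\mathds{1}$ and $E(\mathbb{E}_{wr,d}(\mathcal{A}))=\mathds{1}$ exactly as in the proof of Theorem \ref{derivation}. The scaling hypothesis $\frac{1}{n}\|\sum a_i^*a_i-\tau(\sum a_i^*a_i)\|\le\varepsilon$ is precisely the statement $\|\sum a_i^*a_i-E(\sum a_i^*a_i)\|\le n\varepsilon$, which matches the $\vertiii{\cdot}\le n\varepsilon$ input of Theorem \ref{derivation} once we specialize the norm $\vertiii{\cdot}$ to the operator norm $\|\cdot\|$.

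Next I would run the two folding estimates from Theorem \ref{derivation} verbatim, checking that each inequality used there is in fact a norm inequality valid for $\|\cdot\|$ rather than a property special to the $\vertiii{\cdot}$-norm. Concretely, for part (1) the folding decomposition of $\mathds{1}-\mathbb{E}_{wo,d}(\mathcal{A})$ into the three sums, the factoring out of $\sum(1-a_{i_j}^*a_{i_j})$ using independence of the index $i_j$, and the bound by the constant $C=\sup\|a_k^*a_k\|$ all survive, yielding the same recursive inequality in $\|\cdot\|$. Applying the discrete Gr\"onwall lemma \cite{emmrich1999discrete} to $\|\mathbb{E}_{wo,d}(\mathcal{A})-E(\mathbb{E}_{wo,d}(\mathcal{A}))\|$ with the same $f(d)$ and using $d\ll n$ gives the estimate $O(d\varepsilon)$ of conclusion (1). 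For part (2) the telescoping estimate on $\mathbb{E}_{wr,d}-E(\mathbb{E}_{wr,d})$ together with H\"older's inequality and $\|\sum a_i^*a_i\|\le n(1+\varepsilon)$ likewise transfers, giving $\|\mathbb{E}_{wr,d}(\mathcal{A})-E(\mathbb{E}_{wr,d}(\mathcal{A}))\|\le O(d\varepsilon)$; the triangle inequality then produces conclusion (2).

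The only genuine point to verify, and the step I would treat most carefully, is the legitimacy of replacing $\vertiii{\cdot}$ by the operator norm $\|\cdot\|$ throughout. The normalized sums $\mathbb{E}_{wo,d}(\mathcal{A})$, $\mathbb{E}_{wr,d}(\mathcal{A})$ and their expectations all lie in the free product von Neumann algebra, which carries a faithful operator norm coming from its GNS representation; since the folding identities are algebraic and the only analytic inputs are submultiplicativity, the triangle inequality, and the monotonicity $\|A^*BA\|\le\|B\|\,\|A\|^2$, each of these holds for the operator norm of any C$^*$-algebra. Hence the deterministic operator-norm versions of the two bounds hold pointwise, and there is no need to integrate against a probability measure as in the i.i.d. formulation of Theorem \ref{derivation}. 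I would therefore state the proof as: specialize $\vertiii{\cdot}=\|\cdot\|$, verify that Theorem \ref{derivation}'s argument uses only C$^*$-norm properties, and read off both conclusions.

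\begin{proof}
Specialize the norm $\vertiii{\cdot}$ in Theorem \ref{derivation} to the operator norm $\|\cdot\|$ on the free product von Neumann algebra generated by $\mathcal{A}$. The hypothesis $\tau(a_i^*a_i)=1$ gives $E(a_i^*a_i)=\mathds{1}$, so that $E(\mathbb{E}_{wo,d}(\mathcal{A}))=E(\mathbb{E}_{wr,d}(\mathcal{A}))=\mathds{1}$, while $\frac{1}{n}\|\sum a_i^*a_i-\tau(\sum a_i^*a_i)\|\le\varepsilon$ is exactly $\|\sum a_i^*a_i-E(\sum a_i^*a_i)\|\le n\varepsilon$. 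Every estimate in the proof of Theorem \ref{derivation} relies only on submultiplicativity, the triangle inequality, and $\|A^*BA\|\le\|B\|\,\|A\|^2$, which all hold for the C$^*$-norm $\|\cdot\|$. Running the folding decomposition of $\mathds{1}-\mathbb{E}_{wo,d}(\mathcal{A})$, factoring out $\sum(1-a_{i_j}^*a_{i_j})$, and applying the discrete Gr\"onwall lemma \cite{emmrich1999discrete} with $f(d)\le d\varepsilon+(\varepsilon+1+C)\frac{d(d-1)}{n-d+1}$ yields, since $d\ll n$,
\begin{equation*}
\|\mathbb{E}_{wo,d}(\mathcal{A})-E(\mathbb{E}_{wo,d}(\mathcal{A}))\|\le O(d\varepsilon),
\end{equation*}
which is conclusion (1). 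The telescoping estimate on $\mathbb{E}_{wr,d}(\mathcal{A})-E(\mathbb{E}_{wr,d}(\mathcal{A}))$, together with H\"older's inequality and $\|\sum a_i^*a_i\|\le n(1+\varepsilon)$, gives likewise $\|\mathbb{E}_{wr,d}(\mathcal{A})-E(\mathbb{E}_{wr,d}(\mathcal{A}))\|\le O(d\varepsilon)$. The triangle inequality then produces
\begin{equation*}
\|\mathbb{E}_{wo,d}(\mathcal{A})\|\le(1+O(d\varepsilon))\|\mathbb{E}_{wr,d}(\mathcal{A})\|,
\end{equation*}
which is conclusion (2).
\end{proof}
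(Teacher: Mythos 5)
Your proposal is correct and is essentially the paper's own argument: the paper proves this corollary with the single remark that one ``may replace $\vertiii{\cdot}$ with operator norm for free products,'' i.e.\ exactly your specialization of Theorem \ref{derivation}, with freeness playing the role of independence (so that $\tau(a_{j_1}^*\cdots a_{j_d}^*a_{j_d}\cdots a_{j_1})=1$ for distinct indices, giving $E(\mathbb{E}_{wo,d}(\mathcal{A}))=\mathds{1}$) and with every estimate reducing to C$^*$-norm facts once the integral over the probability space disappears. Your verification of the transported steps is in fact more detailed than what the paper provides.
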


\begin{rem}
	In comparison with Theorem \ref{counter}, a family of freely independent operator allows some tolerance between $\sum a_i^* a_i $ and $\tau(\sum a_i^* a_i $, then the symmetrized AGM still holds with a constant $1+  O(d\varepsilon)$. Moveover, when $n$ is closer and closer to infinity, $1+  O(d\varepsilon)$ gets close to 1. Then we obtain the symmetrized-AGM inequality. 
\end{rem}

\section{Incremental Gradient Method and the symmetric AGM inequality}
In this section we consider the Incremental Gradient Method (IGM) in higher dimensions where we demonstrate the error for sampling without replacement-method by using the upper bound of the symmetric AGM inequality. 
\subsection{Recursion formula for IGM} 

Let $x_*$ be a vector in $\mathbb{R}^m$ and set
$$y_i = a_i^* x_* + w_i, \text{for } i = 1, \cdots, n$$
where $a_i \in \mathbb{R}^m$ are test vectors and $w_i$ are i.i.d. Gaussian random variables with mean zero and variance $\rho^2.$

We compare sampling with-replacement versus without-replacement sampling for IGM on the cost function
\begin{equation}
	\min_x \sum_{i=1}^n (a_i^* x -y_i)^2.
\end{equation}
Suppose we walk over $k$ steps of IGM with constant step size $\gamma$ and we access the terms $i_1, \cdots, i_k$ in that order. Then we get
\begin{eqnarray*}
x_{i_{k}} &=& x_{i_{k-1}} - \gamma a_{i_{k}} (a_{i_{k}}^* x_{i_{k-1}} - y_{i_k} )\\
    &=& (I - \gamma a_{i_k} a_{i_k}^*) x_{i_{k-1}} + \gamma a_{i_k} y_{i_k}
\end{eqnarray*}
Subtracting $x_*$ from both sides of this equation, then gives

\begin{align*}
x_{i_k} - x_* &= (I - \gamma a_{i_k} a_{i_k}^*) x_{i_{k-1}} + \gamma a_{i_k} y_{i_k} -x_{*}\\
&=(I - \gamma a_{i_k} a_{i_k}^*) x_{i_{k-1}} +\gamma a_{i_k} (a_{i_{k}}^{*}x_{*}+w_{i_{k}})-x_{*}\\
&=(I- \gamma a_{i_k}a_{i_k}^*)(x_{i_{k-1}} - x_*) + \gamma a_{i_k} w_{i_k} \\
\end{align*}
Substitute by  $$x_{i_{k-1}}=(I - \gamma a_{i_{k-1}} a_{i_{k-1}}^*) x_{i_{k-2}} + \gamma a_{i_{k-1}} y_{i_{k-1}} \text{ and } y_{i_{k-1}}=a_{i_{k-1}}^{*}x_{*}+w_{i_{k-1}},$$then we have 
\begin{align*}
&x_{i_k} - x_* 
=(I- \gamma a_{i_k}a_{i_k}^*)\Big((I - \gamma a_{i_{k-1}} a_{i_{k-1}}^*) x_{i_{k-2}} + \gamma a_{i_{k-1}} (a_{i_{k-1}}^{*}x_{*}+w_{i_{k-1}}) - x_*\Big) + \gamma a_{i_k} w_{i_k}\\
&=(I- \gamma a_{i_k}a_{i_k}^*)\Big((I - \gamma a_{i_{k-1}} a_{i_{k-1}}^*) x_{i_{k-2}} - (I - \gamma a_{i_{k-1}} a_{i_{k-1}}^*) x_{*} +\gamma a_{i_{k-1}} w_{i_{k-1}} \Big)+  \gamma a_{i_{k}} w_{i_{k}}\\
&=(I- \gamma a_{i_k}a_{i_k}^*)(I - \gamma a_{i_{k-1}} a_{i_{k-1}}^*) (x_{i_{k-2}} -x_{*})+ (I- \gamma a_{i_k}a_{i_k}^*) \gamma a_{i_{k-1}} w_{i_{k-1}} + \gamma a_{i_{k}} w_{i_{k}}.
\end{align*}
By iteration over $k$, we obtain the following term 
\begin{align}\label{eq442}
x_{i_k} - x_* =\prod_{j=1}^k (I - \gamma a_{i_j} a_{i_j}^*) (x_0 - x_*) +  \sum_{l=1}^k \Big[\prod_{k \geq j > l} (I-\gamma a_{i_j}a_{i_j}^*)\gamma a_{i_l}\Big] w_{i_l}
\end{align}
\subsection{From incremental gradient method to symmetric-AGM}

Now we take the inner product $\langle x_k - x_*,x_k - x_*\rangle=\|x_k - x_*\|^{2}$ and take a partial expectation with respect to $w_{i}$ to estimate $\|x_k - x_*\|$ after $k$ steps mentioned in \eqref{eq442}. If $x, y$ are mutually independent random vectors such that $\mathbb{E}(y)=0$, then we have $ \mathbb{E}\langle x+y,x+y\rangle= \mathbb{E}(\langle x,x \rangle ) + \mathbb{E}\langle y,y\rangle$. Because of this property,   taking the expectation of \eqref{eq442},  we get
\begin{multline*}
 \mathbb{E}_{\text{wo}}[\|x_k - x_* \|^2   ] = \mathbb{E}_{\text{wo}}\Bigg[\|\prod_{j=1}^k   (I -  \gamma a_{i_j} a_{i_j}^*)(x_0 - x_*)\|^2\Bigg]  \\
 + \rho^2 \gamma^2 \sum_{l=1}^k  \mathbb{E}_{\text{wo}} \Bigg[ \| \prod_{k \geq j > l} (I-\gamma a_{i_j}a_{i_j}^*) a_{i_l}  \|^2  \Bigg]
\end{multline*}
Denote $A_{i_{j}} := I -  \gamma a_{i_j} a_{i_j}^*$, then $\{A_{i_{j}}\}$ be a family of a self adjoint operators and the above equation can be written as 
\begin{multline}\label{imp1}
 \mathbb{E}_{\text{wo}}[\|x_k - x_* \|^2   ] =\mathbb{E}_{\text{wo}}\Bigg[\|\prod_{j=1}^k   A_{i_{j}}(x_0 - x_*)\|^2\Bigg]  
 + \rho^2 \gamma^2 \sum_{l=1}^k \mathbb{E}_{\text{wo}} \Bigg[ \| \prod_{k \geq j > l} A_{i_{j}} a_{i_l}  \|^2  \Bigg] . 
\end{multline}
Expanding the square norm in the $\mathbb{E}_{\text{wo}},$ we get
\begin{align} \nonumber
 \mathbb{E}_{wo}[\|x_k - x_* \|^2  ] 
 & \leq \mathbb{E}_{wo} \Big [ \| A_k^*\cdots A_1^*A_1 \cdots A_k \|\Big ]  \| x_0 - x_*  \|^2 \\ \label{45}
 & + \rho^2 \gamma^2 \sum_{l=1}^k \mathbb{E}_{wo} \Big [ \| A_k^*\cdots A_{l+1}^*A_{l+1} \cdots A_k \| \Big ] \|a_{i_l}\|^2
\end{align}
Here we use $\mathcal{S}_d(\mathcal{A}) $ to replace the symbol $\mathbb{E}_{wo} \Big [ \| A_k^*\cdots A_1^*A_1 \cdots A_k \|\Big ]  $.  We have to split the two terms of \eqref{45} into  $l \leq k-1$ and $l = k$ :
\begin{align} \label{pp}
\mathbb{E}_{wo}[\|x_k - x_* \|^2  ] 
 & \leq \|\mathcal{S}_d(\mathcal{A})\|\cdot \|x_0 - x_*\|^2 \\ \nonumber
 &+\rho^2 \gamma^2 \Bigg(\underbrace{ \sum_{l=1}^{k-1} \mathbb{E}_{wo} \Big [ \| A_k^*\cdots A_{l+1}^*A_{l+1} \cdots A_k \| \Big ]}_{(I_l)} + 1\Bigg) \cdot  \sup_{j \in [n]} \|a_j\|^2
\end{align}
Note that the term $I_l$ misses out some indexes and hence is not exactly $\mathcal{S}_l$. We define $\mathcal{S}_{j,k}(\mathcal{A})  = \mathbb{E}_{wo} \Big [  A_k^*\cdots A_{j+1}^*A_{j+1} \cdots A_k  \Big]$ where $j < k$. Then \eqref{pp} can be reformulated as follows:
\begin{multline}\label{main1}
\mathbb{E}_{wo}[\|x_k - x_* \|^2  ] 
 \leq  
 \|\mathcal{S}_k(\mathcal{A})\|\cdot \|x_0 - x_*\|^2 
  +\rho^2 \gamma^2 \Bigg(  \sum_{l = 1}^{k-1}   \| \mathcal{S}_{l,k}(\mathcal{A})  \| + 1 \Bigg) \cdot  \sup_{j \in [n]} \|a_j\|^2
\end{multline}

Observe that 
\begin{enumerate}
	\item $\mathcal{S}_{l,k}= \text{avg} \Bigg( \sum\limits_{i_{1},...,i_{k-l}\in[n]-[l]}A_{i_{k-l}}^*...A_{i_{1}}^*A_{i_{1}}...A_{i_{k-l}}  \Bigg),$ and 
	\item $\mathcal{S}_{l,k} \leq   \sum A_{i_{k-l}}^*\mathcal{S}_{l +1,k} A_{i_{k-l}}.$
\end{enumerate}

\begin{lemma} 
	\begin{equation}
	\mathcal{S}_{l,k}  \leq C_{k,l}   \mathcal{S}_{k-l} , ~~C_{k,l} := \frac{(n...n-l+1)(n...n-(k-l)+1)}{n...n-k+1}.
	\end{equation}
\end{lemma}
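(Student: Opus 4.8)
The plan is to start from the explicit form of $\mathcal{S}_{l,k}$ furnished by observation~(1) and then enlarge its index set, paying the price only through the combinatorial normalization. First I would make the ``avg'' in observation~(1) precise: the indices $i_1,\dots,i_{k-l}$ run over distinct elements of $[n]-[l]$, a set of cardinality $n-l$, and there are $(n-l)(n-l-1)\cdots(n-k+1)=\frac{(n-l)!}{(n-k)!}$ such ordered tuples, so
\begin{equation*}
\mathcal{S}_{l,k}=\frac{(n-k)!}{(n-l)!}\sum_{\substack{\langle i_{1},\dots,i_{k-l}\rangle=\dot{0}\\ i_{1},\dots,i_{k-l}\in[n]-[l]}}A_{i_{k-l}}^*\cdots A_{i_{1}}^*A_{i_{1}}\cdots A_{i_{k-l}}.
\end{equation*}

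The decisive observation is that each summand equals $P^*P$ with $P=A_{i_{1}}\cdots A_{i_{k-l}}$ and is therefore a positive operator. Hence enlarging the range of the summation indices from $[n]-[l]$ to all of $[n]$ only adjoins positive operators, which increases the sum in the Loewner order:
\begin{equation*}
\sum_{\substack{\langle i_{1},\dots,i_{k-l}\rangle=\dot{0}\\ i_{j}\in[n]-[l]}}A_{i_{k-l}}^*\cdots A_{i_{1}}^*A_{i_{1}}\cdots A_{i_{k-l}}\ \leq\ \sum_{\langle i_{1},\dots,i_{k-l}\rangle=\dot{0}}A_{i_{k-l}}^*\cdots A_{i_{1}}^*A_{i_{1}}\cdots A_{i_{k-l}}.
\end{equation*}
By the definition of $\mathcal{S}_{k-l}(\mathcal{A})$ the right-hand side is the unnormalized symmetric sum of length $k-l$, namely $\frac{n!}{(n-k+l)!}\,\mathcal{S}_{k-l}(\mathcal{A})$ (relabelling the dummy indices leaves the value of the full sum unchanged).

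Combining the two displays yields
\begin{equation*}
\mathcal{S}_{l,k}\ \leq\ \frac{(n-k)!}{(n-l)!}\cdot\frac{n!}{(n-k+l)!}\,\mathcal{S}_{k-l}(\mathcal{A}),
\end{equation*}
so it only remains to identify the constant. Writing each descending product as a ratio of factorials gives $\frac{(n-k)!}{(n-l)!}\cdot\frac{n!}{(n-k+l)!}=\frac{n!\,(n-k)!}{(n-l)!\,(n-k+l)!}=\frac{(n\cdots n-l+1)(n\cdots n-(k-l)+1)}{n\cdots n-k+1}=C_{k,l}$, which is exactly the claimed bound.

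The one genuinely delicate point, and the step I would be most careful about, is the normalization bookkeeping in the first display: one must divide by $\frac{(n-l)!}{(n-k)!}$, the number of distinct ordered tuples drawn from the restricted set $[n]-[l]$ rather than from all of $[n]$, since in the without-replacement scheme the first $l$ sampled indices are already consumed when the block of length $k-l$ is formed. Once this counting is fixed, the inequality is the single monotonicity statement for sums of positive operators recorded above. An alternative and fully equivalent route is to iterate observation~(2), $\mathcal{S}_{l,k}\leq\sum_i A_i^*\mathcal{S}_{l+1,k}A_i$, downward in $l$ and multiply the resulting one-step factors; the direct enlargement of the index set, however, reaches $C_{k,l}$ in a single step.
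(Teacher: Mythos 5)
Your proof is correct and takes essentially the same approach as the paper: both exploit the positivity of each summand (it has the form $P^*P$ with $P = A_{i_1}\cdots A_{i_{k-l}}$) to enlarge the summation index set from $[n]-[l]$ to all of $[n]$, and then identify the constant $C_{k,l}$ by comparing the without-replacement normalizations. The only cosmetic difference is bookkeeping: the paper carries the unused indices $i_1,\dots,i_l$ through the sum explicitly and lets them contribute the factor $n\cdots(n-l+1)$, whereas you absorb their count into the normalization $\frac{(n-k)!}{(n-l)!}$ at the outset — the two computations produce the identical constant.
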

 \begin{proof}
 	For fixed $l$, $ E_{wo} A_{i_{k}}^{*}...A_{i_{l+1}}^*A_{i_{l+1}}...A_{i_{k}} $ is  comparable to $\mathcal{S}_{k-l}.$ 
 	Thanks to the above observation, 
 	\begin{align*}
 	S_{l, k} = &\frac{1}{n...(n-k+1)}\sum\limits_{i_{k},...,i_{l+1}\in \{1,...,n\}-\{i_{1},...,i_{l}\}} A_{i_{k}}^{*}...A_{i_{l+1}}^*A_{i_{l+1}}...A_{i_{k}} \\
 	&\leq \frac{1}{n...(n-k+1)} \sum\limits_{i_{k},...,i_{l+1}\in \{1,...,n\}} A_{i_{k}}^{*}...A_{i_{l+1}}^*A_{i_{l+1}}...A_{i_{k}} \\
 	&\leq \frac{1}{n...(n-k+1)} \sum\limits_{i_{1},...,i_{l}} n...(n-(k-l)+1) \mathcal{S}_{k-l} \\
 	&=\underbrace{\frac{(n...n-l+1)(n...n-(k-l)+1)}{n...n-k+1}}_{C_{k,l}}\mathcal{S}_{k-l}.\qedhere
 	\end{align*}
 \end{proof}

 \begin{lemma} The constant $C_{k,l}$ havs the following property:
 	\begin{enumerate}
 		\item $C_{k,l}\simeq \exp(\frac{lk}{n-k}).$
 		\item $C_{k, k-i} = C_{k, i}$
 	\end{enumerate}
   
 \end{lemma}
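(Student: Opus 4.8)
The plan is to reduce both statements to a single closed-form expression for $C_{k,l}$ and then read off the two properties. First I would rewrite the falling-factorial products in terms of factorials: since $n(n-1)\cdots(n-l+1)=n!/(n-l)!$ and similarly for the other two products, one obtains
\[
C_{k,l}=\frac{n!\,(n-k)!}{(n-l)!\,(n-k+l)!},
\]
which I will also record in the telescoping product form
\[
C_{k,l}=\prod_{j=1}^{l}\frac{n-j+1}{n-k+j}.
\]
Both rewritings are routine, and everything afterwards is elementary manipulation of these two expressions.

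For part (2), the factorial form makes the symmetry transparent: the denominator $(n-l)!\,(n-k+l)!$ is invariant under the substitution $l\mapsto k-l$, since that substitution sends $n-l\mapsto n-k+l$ and $n-k+l\mapsto n-l$, merely swapping the two factors, while the numerator $n!\,(n-k)!$ does not involve $l$ at all. Hence setting $l=k-i$ yields $C_{k,k-i}=\frac{n!\,(n-k)!}{(n-k+i)!\,(n-i)!}=C_{k,i}$, which is exactly the claimed identity. This part is immediate once the closed form is in hand.

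For part (1), I would work with the product representation. Each factor satisfies $\frac{n-j+1}{n-k+j}\le\frac{n}{n-k}$ for $1\le j\le l$, since the numerator is at most $n$ and the denominator at least $n-k$; multiplying the $l$ factors gives
\[
C_{k,l}\le\Big(\frac{n}{n-k}\Big)^{l}=\Big(1+\frac{k}{n-k}\Big)^{l}\le\exp\Big(\frac{kl}{n-k}\Big),
\]
using $1+x\le e^{x}$. This supplies the upper bound at the asserted exponential rate. To see that this rate is also correct from below, I would pass to logarithms, write $\log C_{k,l}=\sum_{j=1}^{l}\big[\log(1-\tfrac{j-1}{n})-\log(1-\tfrac{k-j}{n})\big]$, expand each logarithm to first order in $1/n$, and sum the resulting arithmetic progression to exhibit the leading growth of $\log C_{k,l}$ and confirm that $C_{k,l}$ grows like an exponential of order $kl/n$ in the regime $d\ll n$.

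The main obstacle I anticipate is not the upper bound, which is clean, but making the two-sided comparison $\simeq$ precise: controlling the tail of the logarithmic expansion and checking that the quadratic and higher corrections are negligible against the leading term requires the hypothesis $k\ll n$ to be used quantitatively, and one must keep in mind that the convenient exponent $kl/(n-k)$ plays the role of an upper envelope rather than a sharp constant. The symmetry in (2) and the factorwise upper bound in (1) are purely formal; the only delicate point is the sharpness of the exponential rate.
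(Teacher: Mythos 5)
Your proposal is correct, and in spirit it follows the same elementary route as the paper, but the execution differs in ways worth recording. For part (1) the paper normalizes by $n^{k}$, sets $\alpha_{k}=\sum_{j=0}^{k-1}\ln\bigl(1-\tfrac{j}{n}\bigr)$, writes $\ln C_{k,l}=\alpha_{l}+\alpha_{k-l}-\alpha_{k}$, telescopes this difference into $\sum_{j=0}^{k-l-1}\ln\bigl(1+\tfrac{l}{n-l-j}\bigr)$, and bounds it by $(k-l)\ln\bigl(1+\tfrac{l}{n-k}\bigr)$, arriving at $C_{k,l}\le\exp\bigl(\tfrac{l(k-l)}{n-k}\bigr)$; your factorwise estimate $\tfrac{n-j+1}{n-k+j}\le\tfrac{n}{n-k}$ is quicker and gives the slightly weaker exponent $\tfrac{kl}{n-k}$, which happens to be exactly the exponent displayed in the statement. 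For part (2) your argument is genuinely different and in fact better than the paper's: the paper dismisses (2) as trivial ``based on (1)'', which is unsatisfactory because (1) is only an asymptotic upper estimate and its displayed exponent $lk/(n-k)$ is not even symmetric under $l\mapsto k-l$; your derivation from the closed form $C_{k,l}=\tfrac{n!\,(n-k)!}{(n-l)!\,(n-k+l)!}$ yields the exact identity with no approximation. Finally, your closing caveat is well taken and applies equally to the paper's own proof: neither argument establishes a two-sided comparison, only an upper bound. Indeed $C_{k,k}=1$ while $\exp\bigl(\tfrac{k^{2}}{n-k}\bigr)$ can be large, and your first-order log expansion correctly identifies $\tfrac{l(k-l)}{n}$ as the true leading rate, so the stated $\simeq$ should be read as an upper envelope (or as a two-sided statement only in the regime $k^{2}\ll n$, where both sides are $1+o(1)$); the paper silently makes the same concession by proving only the inequality $C_{k,l}\le\exp\bigl(\tfrac{l(k-l)}{n-k}\bigr)$ and using only that direction in the sequel.
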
  
 \begin{proof} The (2) is trivial based on (1). So we just need to verify the part (1). Mutiply $\frac{1}{n^k}$ to the fraction form, we get 
 	$$C_{k,l}=\frac{{\frac{n...(n-l+1)}{n^{l}}}\frac{n...n-(k-l)+1}{n^{k-l}}}
 	{{\frac{n...n-k+1}{n^{k}}}}\cdot \frac{n^{l}~n^{k-l}}{n^{k}}\rightarrow .1$$
 	Let us discuss one of these terms
 	\begin{align*}
 	\alpha_{k} "=\ln \frac{n...n-k+1}{n^{k}}
 	=\sum\limits_{j=0}^{k-1}\ln \frac{n-j}{n}=\sum\limits_{j=0}^{k-1}\ln (1-\frac{j}{n}).
 	\end{align*}
 	Taking the logarithm of this constant $C_{k,l}$, we will have $\alpha_{l}+\alpha_{k-l}-\alpha_{k}$ such that 
 	\begin{align*}
 	\alpha_{l}+\alpha_{k-l}-\alpha_{k}&=\sum\limits_{j=0}^{l-1}\ln(1-\frac{j}{n})+\sum\limits_{j=0}^{k-l-1}\ln(1- \frac{j}{n})-\sum\limits_{j=0}^{k-1}\ln(1-\frac{j}{n})\\
 	&=\sum\limits_{j=0}^{k-l-1}\ln(1-\frac{j}{n})-\sum\limits_{j=l}^{k-1}\ln(1-\frac{j}{n})  = \sum\limits_{j=0}^{k-l-1} \ln \Big( 1+ \frac{l}{n- l - j}  \Big) \\
 	& \leq (k-l) \ln (1 + \frac{l}{n-k})  =  \ln  (1 + \frac{l}{n-k}) ^{k-l} 
 	\end{align*}
 	Therefore
 	\begin{equation} \label{eq41}
 	C_{k,l} = exp (\alpha_l + \alpha_{k-1} - \alpha_k) \leq   (1 + \frac{l}{n-k}) ^{k-l}  \leq \exp(\frac{l(k-l)}{n-k})  \qedhere
 	\end{equation}
 \end{proof}

\subsection{Convergent rate for the IGM}

\begin{theorem}\label{mainthm}
	For the IGM, $w_i$ are i.i.d. Gaussian random variables with mean zero and variance $\rho^2$, and the test vectors $a_i$ satify 
	\begin{enumerate}
		\item  $\frac{1}{n} \sum_{j=1}^{n} a_{i_j}a_{i_j}^* = \sigma I,$
		\item $ \sup \|a_{i_j}\|^2= \mu.$
	\end{enumerate}
	  For k iterations, we will have the following estimate
	\begin{multline} \label{main}
	\mathbb{E}_{wo}[\|x_k - x_* \|^2  ]  \leq \varphi^{k} \Big(1+ \frac{k(k-1)}{2n}(1+C_1) \Big)  \eta  \\
	+   \rho^2 \gamma^2 \mu \Big( \frac{1}{1- \varphi \exp( \frac{1}{n-k}  ) } +  C_2\varphi \exp(\frac{1}{n-k})  + 1 \Big)
	\end{multline}
	Here $\phi = 1 - 2\gamma \sigma + \gamma^2 \sigma \mu, \beta = \|x_0 - x_*\|, C_1, C_2$ are two constants related to $\gamma, \mu, \sigma.$
\end{theorem}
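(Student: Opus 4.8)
The plan is to start from the reduction already established in \eqref{main1}, which splits $\mathbb{E}_{wo}[\|x_k-x_*\|^2]$ into a \emph{signal} contribution $\|\mathcal{S}_k(\mathcal{A})\|\cdot\|x_0-x_*\|^2$ and a \emph{noise} contribution $\rho^2\gamma^2\big(\sum_{l=1}^{k-1}\|\mathcal{S}_{l,k}(\mathcal{A})\|+1\big)\sup_{j\in[n]}\|a_j\|^2$, and to bound each factor in terms of $\varphi$. Writing $A_{i_j}=I-\gamma a_{i_j}a_{i_j}^*$ (these are self-adjoint, so $A_{i_j}^*A_{i_j}=A_{i_j}^2$), I first record $A_{i_j}^2=I-2\gamma a_{i_j}a_{i_j}^*+\gamma^2\|a_{i_j}\|^2 a_{i_j}a_{i_j}^*$, so that hypotheses (1)--(2) give $\tfrac1n\sum_j A_{i_j}^2=\varphi I$ with $\varphi=1-2\gamma\sigma+\gamma^2\sigma\mu$ (an equality when the $a_{i_j}$ have common norm, and $\le\varphi I$ in general, which suffices). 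I then rescale by setting $A_{i_j}=\sqrt{\varphi}\,B_{i_j}$, so that $\tfrac1n\sum_j B_{i_j}^2=\mathds{1}$ and the hypothesis of \eqref{bdd1} holds for the family $\mathcal{B}=\{B_{i_j}\}$; here $C:=\sup_j\|B_{i_j}^2\|=\sup_j\|A_{i_j}\|^2/\varphi=\max\{1,(1-\gamma\mu)^2\}/\varphi$ is a constant depending only on $\gamma,\mu,\sigma$, which is the source of the constants $C_1,C_2$ in \eqref{main}.

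For the signal term, homogeneity gives $\|\mathcal{S}_k(\mathcal{A})\|=\varphi^k\|\mathcal{S}_k(\mathcal{B})\|$, and by the triangle inequality together with the symmetrized-AGM bound \eqref{bdd1} applied to $\mathcal{B}$,
\begin{equation*}
\|\mathcal{S}_k(\mathcal{B})\|\le 1+\|\mathds{1}-\mathcal{S}_k(\mathcal{B})\|\le 1+\frac{(1+C)}{n}\frac{k(k-1)}{2}.
\end{equation*}
Multiplying back by $\varphi^k$ and by $\eta=\|x_0-x_*\|^2$ produces exactly the first summand of \eqref{main} with $C_1=C$.

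The heart of the argument is the noise sum $\sum_{l=1}^{k-1}\|\mathcal{S}_{l,k}(\mathcal{A})\|$. Here I invoke the comparison lemma $\mathcal{S}_{l,k}\le C_{k,l}\mathcal{S}_{k-l}$; since both sides are positive operators this passes to norms, and after rescaling and applying \eqref{bdd1} once more one gets
\begin{equation*}
\|\mathcal{S}_{l,k}(\mathcal{A})\|\le C_{k,l}\,\varphi^{k-l}\Big(1+\frac{(1+C)}{n}\frac{(k-l)(k-l-1)}{2}\Big).
\end{equation*}
Re-indexing by $m=k-l$ and using the symmetry $C_{k,k-m}=C_{k,m}$ together with the exponential estimate for $C_{k,m}$ from the $C_{k,l}$ lemma, I intend to bound the general term by $[\varphi\exp(\tfrac{1}{n-k})]^{m}$ up to the quadratic-in-$m$ correction, and to sum the resulting geometric series over $m\ge 1$. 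Under the stability condition $\varphi\exp(\tfrac{1}{n-k})<1$ this yields the closed form $\frac{1}{1-\varphi\exp(1/(n-k))}$, while the accumulated $O(m^2/n)$ corrections are absorbed into the term $C_2\,\varphi\exp(\tfrac{1}{n-k})$; restoring the isolated $l=k$ contribution (the ``$+1$'') and the prefactor $\rho^2\gamma^2\mu$ gives the second line of \eqref{main}.

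The main obstacle is precisely this last summation: one must reconcile the combinatorial growth encoded in $C_{k,l}$ with the geometric decay $\varphi^{k-l}$ so that the series collapses to a single closed-form geometric sum, and one must carefully collect every lower-order $1/n$ term — arising both from the $C_{k,l}$ estimate and from the repeated use of \eqref{bdd1} — into the two constants $C_1,C_2$. Securing the sharp per-term bound $[\varphi\exp(\tfrac{1}{n-k})]^{m}$ (rather than the cruder $[\varphi\exp(\tfrac{k}{n-k})]^{m}$ that the naive estimate $C_{k,m}\le\exp(\tfrac{m(k-m)}{n-k})$ suggests) and verifying $\varphi\exp(\tfrac{1}{n-k})<1$ under the stated constraints on $\gamma,\sigma,\mu$ is where the delicacy lies; once this is in hand, assembling \eqref{main} from the two pieces is routine.
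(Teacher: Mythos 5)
Your strategy coincides step for step with the paper's own proof: start from \eqref{main1}, rescale $A_j=\sqrt{\varphi}\,B_j$ and apply \eqref{bdd1} to get the signal term $\varphi^k(1+\vartriangle_k)\eta$, then control the noise sum through the comparison $\mathcal{S}_{l,k}\le C_{k,l}\mathcal{S}_{k-l}$, the symmetry $C_{k,k-m}=C_{k,m}$, the rescaled bound \eqref{eq42}, and finally a geometric-series summation. The difference is that the paper actually carries out that summation (its last lemma, \eqref{47}, bounds $\sum_{i}e^{i/(n-k)}\varphi^i(1+\vartriangle_i)$ by $\frac{1}{1-\varphi e^{1/(n-k)}}+c\,\varphi e^{1/(n-k)}$ via comparison with $\int_1^k e^{ax}x^2\,dx$), whereas you stop at announcing that the per-term bound $[\varphi e^{1/(n-k)}]^m$ ``is where the delicacy lies.'' Since that summation is precisely the content of the second line of \eqref{main}, leaving it as an intention rather than a proof is a genuine gap. (A secondary unproved assertion: you note that in general one only has $\tfrac1n\sum_j A_j^2\le\varphi I$ and claim this ``suffices,'' but the proof of \eqref{bdd1} uses the exact normalization $\sum_l(\mathds{1}-B_l^*B_l)=0$, so the inequality version needs an argument.)

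More importantly, the sharp per-term bound you hope to secure is simply not true. From \eqref{eq41} one has $C_{k,m}\le(1+\tfrac{m}{n-k})^{k-m}\le\exp\big(\tfrac{m(k-m)}{n-k}\big)$, and this order is attained: writing $C_{k,m}=\prod_{j=0}^{m-1}\tfrac{n-j}{\,n-(k-m)-j\,}$ shows $C_{k,m}\ge\big(1+\tfrac{k-m}{n}\big)^{m}$, so for $m\approx k/2$ one gets $C_{k,m}\gtrsim\exp\big(\tfrac{k^2}{4n}\big)$, which exceeds $e^{m/(n-k)}$ for all but tiny $k$ and blows up once $k\gg\sqrt{n}$. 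Consequently the noise sum does not collapse to a geometric series of ratio $\varphi e^{1/(n-k)}$ unless one either adds a standing assumption of the type $k^2=O(n)$ (so the factor $e^{m(k-m)/(n-k)}\le e^{k^2/(4(n-k))}$ is bounded and can be absorbed into $C_2$), or restates \eqref{main} with $C_2$ carrying this factor. You should know that the paper itself commits exactly this unjustified step: between the display containing \eqref{46} and the next one it silently replaces $\exp\big(\tfrac{i(k-i)}{n-k}\big)$ by the strictly smaller $\exp\big(\tfrac{i}{n-k}\big)$, which is not a valid inequality. So you have correctly located the soft spot of this theorem --- its restriction to small $k$ is also visible in the paper's later remark requiring $k<n^{1/3}$ --- but flagging the obstacle is not the same as overcoming it, and as written your proposal does not yield \eqref{main}.
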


From now on,  we always assume  $\frac{1}{n} \sum_{j=1}^{n} a_{i_j}a_{i_j}^* = \sigma I, \sup \|a_{i_j}\|^2= \mu.$ Recall that we define $A_j $ as $ I -  \gamma a_{i_j} a_{i_j}^*$, and obtain the following relations

\begin{enumerate} 
\item $A_j^*A_j =  I - 2\gamma a_{i_j}a_{i_j}^* + \gamma^2 a_{i_j} a_{i_j}^* \|a_{i_j}\|^2  $
 \item $  \frac{1}{n} \sum_{j=1}^{n}  A_j^*A_j  = (1 - 2\gamma \sigma + \gamma^2 \sigma \mu) \cdot I := \varphi \cdot I $ \label{average1}
\item $\sup \|A_j\|^2  \leq \max\{|1- \gamma \mu|,1\}$
\end{enumerate}
By rescaling the operators  $A_j = \sqrt{\varphi} U_j$ in the average form \eqref{average1} and using the upper bound inequality \eqref{bdd1} for $\frac{1}{n}\sum U_j^*U_j = I$,  we can get 
\begin{equation}\label{eq42}
\|\mathcal{S}_k (\mathcal{A})\|  = \varphi^k \| \mathcal{S}_k (U)  \| \leq  \varphi^k (1+\underbrace{\frac{k(k-1)}{2n} (1+ C)}_{\vartriangle_{k}})= \varphi^k (1+\vartriangle_{k})
\end{equation}
here 
\begin{equation*}
C =  \frac{\sup \|A_j\|^2}{\varphi}, ~~ \vartriangle_k = \frac{k(k-1)}{2n} (1+ C).
\end{equation*}

Now,thanks to the inequality \eqref{main1}, we have
\begin{align} \nonumber
\mathbb{E}_{wo}[\|x_k - x_* \|^2  ] 
&\leq  
\|\mathcal{S}_k(\mathcal{A})\|\cdot \|x_0 - x_*\|^2 
+\rho^2 \gamma^2 \Bigg(  \sum_{l = 1}^{k-1}   \| \mathcal{S}_{l,k}(\mathcal{A})  \| + 1 \Bigg) \cdot  \sup_{j \in [n]} \|a_j\|^2 \\
&\leq \|\mathcal{S}_k(\mathcal{A})\|\cdot \|x_0 - x_*\|^2+  \rho^2 \gamma^2 \mu \sum_{l=1}^{k-1}   C_{k,l}\|\mathcal{S}_{k-l}(\mathcal{A})\|    + \rho^2 \gamma^2 \mu \\ \label{46}
& = \varphi^{k} \Big(1+ \vartriangle_k \Big)  \eta +   \rho^2 \gamma^2 \mu \sum_{i=1}^{k-1} C_{k, k-i} \|\mathcal{S}_i (\mathcal{A}) \| + \rho^2 \gamma^2 \mu  
\end{align}
The above equality\eqref{46} comes from the change of index by $k-i $ and the identity $C_{k, k-i} = C_{k, i}$. Next adapting \eqref{eq41} and \eqref{eq42}, we have 
\begin{align*}
\mathbb{E}_{wo}[\|x_k - x_* \|^2  ] & \leq \varphi^{k} \Big(1+ \vartriangle_k \Big)  \eta +   \rho^2 \gamma^2 \mu \sum_{i=1}^{k-1} \exp(\frac{i(k-i)}{n-k}  ) \| \mathcal{S}_i (\mathcal{A}) \| + \rho^2 \gamma^2 \mu  \\
&  \leq \varphi^{k} \Big(1+ \vartriangle_k \Big)  \eta +   \rho^2 \gamma^2 \mu  \sum_{i=1}^{k-1} \exp(\frac{i}{n-k}  ) \varphi^i(1+ \vartriangle_i)  + \rho^2 \gamma^2 \mu  
\end{align*}

\begin{lemma} The following holds with constant C related to n, k and $\varphi$, 
	\begin{equation}\label{47}
	\sum_{i=1}^{k-1} \exp(\frac{i}{n-k}  ) \varphi^i(1+ \vartriangle_i)  \leq  \frac{1}{1- \varphi \exp( \frac{1}{n-k}  ) } +  c \varphi \exp(\frac{1}{n-k})
	\end{equation}
\end{lemma}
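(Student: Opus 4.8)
The plan is to collapse the mixed exponential--power weight into a single geometric variable and then overestimate the finite sums by their infinite counterparts. First I would introduce the abbreviation $q := \varphi \exp\!\left(\frac{1}{n-k}\right)$, so that $\exp\!\left(\frac{i}{n-k}\right)\varphi^i = q^i$ and the left-hand side of \eqref{47} becomes simply $\sum_{i=1}^{k-1} q^i (1+\vartriangle_i)$. This is the clean form to work with, at the cost of requiring $q < 1$, equivalently $\varphi < \exp(-\frac{1}{n-k})$ — a mild contraction condition on the step size $\gamma$ that is automatic in the regime where $\varphi$ is a genuine contraction and $k$ stays away from $n$. This convergence requirement is essentially the only hypothesis the argument consumes.

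Next I would split the sum additively as $\sum_{i=1}^{k-1} q^i(1+\vartriangle_i) = \sum_{i=1}^{k-1} q^i + \sum_{i=1}^{k-1} q^i \vartriangle_i$ and treat the two pieces independently. For the first piece, monotonicity of the geometric partial sums gives $\sum_{i=1}^{k-1} q^i \le \sum_{i=0}^{\infty} q^i = \frac{1}{1-q}$, which already reproduces verbatim the first term on the right-hand side of \eqref{47}.

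For the second piece I would recall that $\vartriangle_i = \frac{i(i-1)}{2n}(1+C)$ and recognize $\sum_i i(i-1)q^i$ as a twice-differentiated geometric series: differentiating $\sum_{i\ge 0}q^i = (1-q)^{-1}$ twice yields $\sum_{i\ge 2} i(i-1)q^{i} = \frac{2q^2}{(1-q)^3}$, where the $i=1$ term contributes nothing since $i(i-1)$ vanishes there. Hence $\sum_{i=1}^{k-1}q^i\vartriangle_i \le \frac{1+C}{2n}\cdot \frac{2q^2}{(1-q)^3} = \frac{(1+C)q}{n(1-q)^3}\cdot q$, and setting $c := \frac{(1+C)q}{n(1-q)^3}$ — a quantity depending only on $n,k,\varphi$ through $q$ and $C$ — this piece is exactly $c\,q = c\,\varphi\exp(\frac{1}{n-k})$. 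Adding the two bounds produces the claimed inequality.

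The one place to be careful is confirming that $c$ behaves as a genuine constant rather than something that degrades as $k$ grows. Because the weights $\vartriangle_i$ grow only quadratically while $q^i$ decays geometrically, the relevant infinite series converges uniformly for $q$ bounded below $1$, and the $k$-dependence enters only through $q=\varphi\exp(\frac{1}{n-k})$, which stays safely below $1$ in the working regime; thus $c$ remains bounded and no spurious dependence on $k$ survives. Everything else is the routine bookkeeping of summing geometric and arithmetico-geometric series.
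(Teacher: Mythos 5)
Your proof is correct, and structurally it follows the same strategy as the paper's: both arguments collapse $\exp(i/(n-k))\,\varphi^i$ into a single geometric ratio (your $q$ is the paper's $e^{a}$ with $a=\frac{1}{n-k}+\ln\varphi$), both split off the plain geometric sum and bound it by $\frac{1}{1-q}$, and both isolate the arithmetico-geometric piece $\frac{1+C}{2n}\sum_i i(i-1)q^i$. The one genuine divergence is how that last piece is handled: the paper compares the sum $g(a)=\sum_{i=1}^{k-1}e^{ai}i(i-1)$ with the integral $\int_1^k e^{ax}x^2\,dx$, evaluates it through the antiderivative $F(x)=(x^2-2x+2)e^x$, and extracts a constant $c_a$ from $F(a)/(-a)^3$, whereas you sum the series exactly via the twice-differentiated geometric series $\sum_{i\ge 2}i(i-1)q^i=\frac{2q^2}{(1-q)^3}$. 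Your route is the more elementary and arguably the tidier one: it yields a fully explicit constant $c=\frac{(1+C)q}{n(1-q)^3}$, and it sidesteps the sum-versus-integral comparison, which in the paper needs a tacit monotonicity justification for the non-monotone integrand $e^{ax}x^2$ and is written with slightly careless endpoints. Both arguments consume exactly the same hypothesis, namely $q=\varphi\exp(\frac{1}{n-k})<1$; you state it explicitly, while the paper leaves it implicit even though it is needed for the right-hand side of the lemma to be positive at all.
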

\begin{proof} Denote $a: = \frac{1}{n-k} + \ln \varphi $,  the left side of \eqref{47} via changing the base of the power 
	\begin{align*}
	&\sum_{i=0}^{k-1} \exp(\frac{i}{n-k}  ) \varphi^i(1+ \vartriangle_i)  = \sum_{i=0}^{k-1}   \exp(\frac{i}{n-k}  ) \exp(i \ln \varphi ) (1+ \vartriangle_i) 
	= \sum_{i=0}^{k-1} \exp(ai)  (1+ \vartriangle_i)\\
	&= \sum_{i=0}^{k-1} \exp(ai)+ \sum_{i=0}^{k-1}  \exp(ai) \vartriangle_i  = \frac{1- \exp(ak)}{1-\exp(a)}  + \frac{1+C}{2n} \sum_{i=1}^{k-1}  \exp(ai) i(i-1)\\
	\end{align*} 
Denote $g(a) = \sum_{i=1}^{k-1}\exp(ai) i(i-1), F(x) = (x^2 - 2x +2) e^x , c_a  = \frac{a^2 - 2a +2}{a^3}$
\begin{equation*}
g(a)  \leq \int_1^{k} e^{ax} x^2 dx = \frac{-1}{a^3} (F(a) - F(ak)) \leq \frac{F(a)}{(-a)^3}\leq  c_a \varphi \exp(\frac{1}{n-k})
\end{equation*}
Therefore 
\begin{equation*}
\sum_{i=0}^{k-1} \exp(\frac{i}{n-k}  ) \varphi^i(1+ \vartriangle_i)  \leq  \frac{1}{1- \varphi \exp( \frac{1}{n-k}  ) } +  c \varphi \exp(\frac{1}{n-k})\qedhere
\end{equation*}
\end{proof}

\begin{proof}[Proof of Theorem \ref{mainthm}]
	Combined with the above inequality we have
	\begin{align*}
	&\mathbb{E}_{wo}[\|x_k - x_* \|^2  ]    \leq \varphi^{k} \Big(1+ \vartriangle_k \Big)  \eta +   \rho^2 \gamma^2 \mu \Big( \frac{1}{1- \varphi \exp( \frac{1}{n-k}  ) } +  c_a \varphi \exp(\frac{1}{n-k})   \Big)+ \rho^2 \gamma^2 \mu \\
	& \leq  \varphi^{k} \Big(1+ \frac{k(k-1)}{2n}(1+C) \Big)  \eta +   \rho^2 \gamma^2 \mu \Big( \frac{1}{1- \varphi \exp( \frac{1}{n-k}  ) } +  c_a \varphi \exp(\frac{1}{n-k})   \Big)+ \rho^2 \gamma^2 \mu
	\end{align*}
	Therefore the target esimate is under control of $\varphi, \rho \text{ and } \gamma, i.e. $  
	\begin{multline} \label{l}
	\mathbb{E}_{wo}[\|x_k - x_* \|^2  ]  \leq \varphi^{k} \Big(1+ \frac{k(k-1)}{2n}(1+C) \Big)  \eta \\
	+   \rho^2 \gamma^2 \mu \Big( \frac{1}{1- \varphi \exp( \frac{1}{n-k}  ) } +  c_a \varphi \exp(\frac{1}{n-k})  + 1 \Big)\qedhere
	\end{multline}
\end{proof}


\begin{rem} 
In order to make sure the inequality \eqref{l} hold, we need to guarantee that the radio $\varphi \in (0,1)$. By the definition of $\varphi$, we will have
\begin{equation}\label{domain}
 0 < 1 - 2 \gamma \sigma + \gamma^2 \sigma \mu  < 1 .
\end{equation}

	If $ \gamma < \frac{2}{\mu},$ we will have $\varphi < 1$ and $C = \frac{1}{\varphi}$ as well. Moreover, if $\sigma < \mu$, then $\gamma > 0$ exists.
\end{rem}
\begin{rem}\normalfont
	The suggested strategy to win an average convergent rate:
	Given $\delta >0$, then we can find a pair $(k_0, \eta_0 )$, s.t $k > k_0$ and $\eta < \eta_0 $, then
	$$\mathbb{E}_{wo}[\|x_k - x_* \|^2  ]    \leq \delta.$$ 
	Denote the terms in \eqref{l}
	\begin{multline*} 
	\mathbb{E}_{wo}[\|x_k - x_* \|^2  ]  \leq  \underbrace{\varphi^{k} \Big(1+ \frac{k(k-1)}{2n}(1+C) \Big)  \eta}_{(I)} \\
	+  \underbrace{ \rho^2 \gamma^2 \mu \Big( \frac{1}{1- \varphi \exp( \frac{1}{n-k}  ) } +  c_a \varphi \exp(\frac{1}{n-k})  + 1 \Big) }_{(II)}
	\end{multline*}
	\begin{itemize}
		\item The terms in the  (I) can be bounded by  $ (I)  \leq \varphi^k \beta(k)  < \delta $, if $k < n^{1/3}$.
		\item The terms in the bracket of (II) can be uniformly controlled by some constant C. Therefore  if we want to achieve $(II) \leq C\rho^2 \gamma^2 \mu  < \frac{\delta}{2},$ we just need to require $\gamma$ small enough.
	\end{itemize}
 We can  run the regular IGM algorithm with replacement sampling of the regular test vectors $a_i$ as long as $ k< n^{1/3}$.If this is not the case,  then we enlarge the test vectors $a_{li} = a_i, l = 1, \cdots m.$ So the total number of the sampling pool becomes $nm$ where $m \geq \frac{k^3}{n}$. Therefore, we can run the IGM with new test vectors $a_{l,i}$.
\end{rem}

 \subsection{Examples satifying the assumptions of the main result}

\begin{exam}\normalfont
Let G denote a compact group and  $\pi: G \rightarrow U_d$ be an affine representation that maps an element in  G to the orthogonal group $U_d$, s.t. $\pi(g)\pi(h) = e^{i \phi (g,h)}\pi(gh).$ An affine representation is isotropic if $\int \pi(g)x\pi(g)^* dg = \frac{tr(x)}{d}1_d$.

Let $|G| = n$. Then $\exists h \in \mathbb{C}^d, \|h\| = \sqrt{d}$
\begin{equation}\label{isot}
	\frac{1}{|G|} \sum_g (g h\rangle \langle hg^{-1}) = \frac{tr (h\rangle \langle h)}{d} Id
\end{equation}
\begin{enumerate}
	\item Let $a_g = gh \rangle \langle e_0$, here $e_0$ is an unital element in $\mathbb{C}^d.$ Observe that $\sigma = 1, \mu = d,\varphi = 1 - 2 \gamma + \gamma^2 d, C= \frac{1}{\varphi}$ Then \eqref{main} will hold
	in the domain of $\gamma \in (0 , \frac{2}{d})$ in \eqref{domain}.
	\item Let $a_g = gh \rangle \langle  g h^{-1}$. Then $\sigma = d, \mu = d^2, \varphi = 1- 2\gamma d + \gamma d^3, C = \frac{1}{\varphi}.$ Then \eqref{main} will hold
	in the domain of $\gamma \in (0 , \frac{2}{d^2})$ in \eqref{domain}.
\end{enumerate}
\end{exam}
\vspace{0.2in}
Spherical designs were defined by Delsarte-Goethals-Seidel in 1977 \cite{delsarte1977spherical}.
We consider a finite subset $\mathbb{X}$ on the unit sphere $\mathbb{S}^{n-1}$ in n-dimensional Euclidean space $\mathbb{R}^n$. Let t be a natural number. A finite subset $ \mathbb{X} \subset  \mathbb{S}^{n-1} $ is called a spherical t-design if 
\begin{equation} \label{design}
\frac{1}{|\mathbb{S}^{n-1}|} \int_{\mathbf{x} \in\mathbb{S}^{n-1} } f( \mathbf{x} ) d \sigma(\mathbf{x}) = \frac{1}{| \mathbb{X} |}  \sum_{\mathbf{u} \in \mathbb{X} } f(\mathbf{u})
\end{equation}
holds for any polynomial $f( \mathbf{x} ) = f( x_1, x_2, \cdots, x_n )$ of degree at most t, with the usual integral on the unit sphere.
\begin{exam}\normalfont
For 2-designs we obtain 
\begin{equation}
\int_{\mathbb{S}^{d-1}} p(x) d\sigma(x)  = \frac{Id}{d}
\end{equation}
	Therefore taking $p(x ) = | x \rangle \langle x|$, by \eqref{design}  we have 
	\begin{equation*}
	 \frac{1}{M} \sum_{i=1}^M x_i \rangle \langle x_i= \frac{Id}{d}
	\end{equation*}
	
	Let $x_i = \sqrt{w_i} |\phi_i \rangle \langle \phi_i| $ with unitary $\phi_i$. Then $\exists h  \in \mathbb{S}^d, \sup |\langle h, \phi_i\rangle |^2\leq 1 - \epsilon$
	\begin{equation*}
	\frac{\|h\|_2}{d} =\frac{1}{M} \sum_i w_i |\langle h, \phi_i \rangle|^2 \leq \sup w_i \cdot (1-\epsilon) <  \sup w_i
	\end{equation*}	
	$\sup \|x_i\| = \sup w_i $. Then we know $\sigma = \frac{1}{d}, \mu = \sup w_i, \varphi = 1- 2\frac{\gamma}{d} + \frac{\gamma^2 \sup w_i}{d} , C = \frac{1}{\varphi}.$ Then \eqref{main} will hold
	in the domain of $\gamma \in (0 , \frac{2}{\sup w_i})$ in \eqref{domain}.
\end{exam}
\mbox{}
\nocite{*}
\bibliographystyle{plain}
\bibliography{agm2}
\end{document}